\documentclass[12pt]{amsart}
\usepackage[margin=1in]{geometry}
\usepackage{amsmath}
\usepackage{amsfonts}
\usepackage{amssymb}
\usepackage{graphicx}
\usepackage{mathrsfs}
\usepackage{graphicx,color}
\usepackage[mathscr]{eucal}
\usepackage{comment}
\usepackage[all]{xy}
\usepackage{hyperref}
\hypersetup{colorlinks=true, linkcolor=blue, citecolor=blue}
\usepackage{tikz}
\usepackage{tikz-cd}
\usepackage{graphicx}
\usepackage[shortlabels]{enumitem}
\usetikzlibrary{decorations.pathreplacing}

\newtheorem{theorem}{Theorem}[section]
\newtheorem{lemma}[theorem]{Lemma}

\newtheorem{corollary}[theorem]{Corollary}

\theoremstyle{definition}
\newtheorem{definition}[theorem]{Definition}
\newtheorem{example}[theorem]{Example}

\theoremstyle{remark}
\newtheorem{remark}[theorem]{Remark}

\numberwithin{equation}{section}

\newcommand{\norm}[1]{\lVert#1\rVert}

\newcommand{\ZZ}{\mathbb{Z}}

\newcommand{\RR}{\mathbb{R}}
\newcommand{\CC}{\mathbb{C}}

\newcommand{\cA}{\mathcal{A}}
\newcommand{\cD}{\mathcal{D}}

\newcommand{\cJ}{\mathcal{J}}

\newcommand{\CZ}{\mathrm{CZ}}

\newcommand{\tel}{\mathrm{tel}}
\newcommand{\red}{\mathrm{red}}
\newcommand{\co}{\mathrm{co}}
\newcommand{\Cone}{\mathrm{Cone}}

\DeclareMathOperator{\im}{im}
\DeclareMathOperator{\coker}{coker}

\title{Symplectic cohomology with support and aspherical perturbations}

\date{\today}

\author{Yuhan Sun}
\address{Imperial College London,
Department of Mathematics,
London, SW7 2AZ, U.K}
\email{yuhan.sun@imperial.ac.uk}

\begin{document}

\begin{abstract}
    For a compact subset of an aspherical symplectic manifold, we show that the symplectic cohomology with support on it remains invariant under suitable perturbations of the symplectic structure.
\end{abstract}

\maketitle

\tableofcontents

\section{Introduction}

We study some invariants of symplectic manifolds under the general name \textit{symplectic cohomology}. Roughly speaking, for a suitable symplectic manifold $(M,\omega)$ and a subset $K$ of $M$, there exists a ring $SH^*_M(K,\omega)$. The first such definition is given by Floer-Hofer \cite{FH}, where $M=\CC^n$ and $K$ is a bounded open subset. Over the past thirty years, it has been developed intensively and played a central role in the research of symplectic topology, Hamiltonian dynamics, and mirror symmetry \cite{VitSH,SeidelSH,Se,CO}. Here we are interested in how it is sensitive to the symplectic structure $\omega$.

From now on, let $K$ be a compact subset of a closed symplectic manifold $(M,\omega)$. An acceleration datum $\cD$ for $K$ is a collection of suitable Hamiltonian functions and almost complex structures on $M$. It produces a telescope $\tel^*(\cD)$, which is a chain complex whose generators are Hamiltonian orbits, and the differentials are given by counting solutions to the Floer equations. The Hamiltonian action functional gives an action filtration on $\tel^*(\cD)$. For any real numbers $a<b$, we write $\tel^*(\cD)_{[a,b)}$ as the sub-quotient complex whose generators have actions in $[a,b)$. These telescopes form a direct system in $a$ and an inverse system in $b$, as $a\to -\infty$ and $b\to +\infty$. The symplectic cohomology with support on $K$ defined by Varolgunes \cite{Var,BSV} can be computed in the following way
\[
\begin{aligned}
    SH^*_M(K,\omega)= H(\varprojlim_b\varinjlim_a\tel^*(\cD)_{[a,b)})=H(\varprojlim_b\tel^*(\cD)_{(-\infty,b)}).
\end{aligned}
\]
The resulting homology groups are independent of the choice of acceleration data $\cD$. It is an interesting question how they depend on $\omega$. In this article, we focus on the following situation.

Let $\cJ(M)$ be the space of all almost complex structures on a smooth manifold $M$. Given a symplectic form $\omega$ on $M$, we write $\cJ_\tau(M,\omega)$ as the space of almost complex structures tamed by $\omega$, that is,
\[
\cJ_\tau(M,\omega):=\{J\in \cJ(M)\mid \omega(\cdot,J\cdot)>0\}.
\]

\begin{definition}
    Let $\omega,\omega'$ be two symplectic forms on $M$. They are called adjacent if 
    \[
    \cJ_\tau(M,\omega)\cap 
    \cJ_\tau(M,\omega')\neq \emptyset.
    \]
    
\end{definition}

Since the tameness is an open condition, we have that $\cJ_\tau(M,\omega)$ is an open subspace of $\cJ(M)$. Therefore, if $\omega,\omega'$ are sufficiently close, then they are adjacent. In particular, if we have a smooth family of symplectic forms $\{\omega_t\}_{t\in[0,1]}$, then we can connect $\omega_0,\omega_1$ by a finite sequence of adjacent symplectic forms.

\begin{definition}
    Let $U$ be an open subset of $M$. Two symplectic forms $\omega,\omega'$ on $M$ are called $U$-comparable if $\omega=\omega'$ on $U$, and for any sequence of maps $u_k: (D^2,\partial D^2)\to (M,U)$, we have $\int u^*_k\omega\to +\infty$ if and only if $\int u^*_k\omega'\to +\infty$.
\end{definition}

Our main result is the following.

\begin{theorem}\label{thm:main}
    Let $K$ be a compact subset of a closed manifold $M$. Suppose that $\omega$ and $\omega'$ are two adjacent aspherical symplectic forms on $M$ which are $U$-comparable for some neighborhood $U$ of $K$. Then there are group isomorphisms
    \[
    \begin{aligned}
        SH^{*}_M(K,\omega)\cong SH^{*}_M(K,\omega').
    \end{aligned}
    \]
\end{theorem}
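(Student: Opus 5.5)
The plan is to compare the two telescopes directly. We use one acceleration datum whose almost complex structures are tamed by both forms, and whose Hamiltonians are supported where $\omega$ and $\omega'$ agree or else are controlled there. Since $\omega,\omega'$ are adjacent, we fix $J_0\in\cJ_\tau(M,\omega)\cap\cJ_\tau(M,\omega')$. Tameness is open and the intersection is nonempty and open, so we choose an acceleration datum $\cD=(H_n,J_n)$ for $K$ with all $J_n$ (and all interpolating families) in $\cJ_\tau(M,\omega)\cap\cJ_\tau(M,\omega')$. We take the Hamiltonians $H_n$ to be $C^2$-small, and constant outside $U$: $H_n$ vanishes near $K$ (in the limit) and equals a large constant $c_n$ on $M\setminus U$. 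All nonconstant dynamics then live in $U$, where $\omega=\omega'$. In particular the Hamiltonian vector fields of $H_n$ agree for $\omega$ and $\omega'$, because on $U$ the forms agree and outside $U$ the field is zero. So the $1$-periodic orbits coincide, and the Floer equation $\partial_s u+J(\partial_t u-X_{H})=0$ is literally the same equation for both forms. The moduli spaces, and hence the underlying chain complexes $\tel^*(\cD)$, are the same. Regularity and compactness need care; compactness is controlled by energy and bubbling, and asphericity (of both forms) rules out sphere bubbling. The $J$ being tamed by both forms gives a $C^0$/energy bound for either form.

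The difference lies entirely in the action filtration. Because the forms are aspherical, but orbits may be noncontractible or the forms not exact, we work with contractible orbits capped by disks. Asphericity makes the action $\cA_\omega(x)=-\int_{D}v^*\omega+\int H(x)$ independent of the cap. The $\omega$- and $\omega'$-actions of an orbit differ by $\int v^*(\omega-\omega')$. Since orbits lie in $U$ (or are constant), we can choose caps so that the comparison is governed by disks with boundary in $U$. The key step is to show that the two filtrations are \emph{cofinally equivalent} on the completion. We need a function $\phi:\RR\to\RR$ with $\phi(b)\to+\infty$ as $b\to+\infty$ such that $\cA_{\omega'}\ge b$ implies $\cA_{\omega}\ge\phi(b)$, and symmetrically. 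The $U$-comparability hypothesis is exactly what rules out a sequence of orbits (with caps that are disks with boundary in $U$) whose $\omega'$-areas diverge while the $\omega$-areas stay bounded. Hence the inverse limits $\varprojlim_b\tel^*(\cD)_{(-\infty,b)}$ taken with respect to $\cA_\omega$ and $\cA_{\omega'}$ agree as completions of the same complex. The differential is the same, so their homologies agree.

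For the main obstacle, the hard step is the last one: turning $U$-comparability, a statement about disks with boundary in $U$, into control of actions of Hamiltonian orbits and Floer cylinders. Orbits in $U$ may be contractible in $M$ but not in $U$. The Floer differential also need not decrease $\omega'$-action merely because it decreases $\omega$-action; we only get it from the Floer equation with a $J$ tamed by $\omega'$, which does hold here. I would first show that each generator's two actions differ by the area of a disk with boundary on an orbit in $U$, and then argue by contradiction using sequences. If the completions differed, there would be a sequence of generators with $\cA_{\omega'}\to+\infty$ but $\cA_\omega$ bounded. Since $H_n$ is bounded on each level, the caps would give disks with boundary in $U$ and divergent $\omega'$-area but bounded $\omega$-area, contradicting $U$-comparability. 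A subtlety is that the $H_n$ grow, so the bound must hold uniformly across the telescope. I would handle this by checking that the telescope truncations are compatible level-wise, and that the exchange of $\varprojlim_b$ with the identity map is a chain isomorphism of completed complexes.
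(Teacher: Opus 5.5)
There is a genuine gap at the first step: the complex you want to compare does not exist as written. A Hamiltonian that is constant on $M\setminus U$ makes every point of $M\setminus U$ a degenerate constant $1$-periodic orbit. So $CF^*(H_n)$, and hence $\tel^*(\cD)$, is not defined, and such $H_n$ are not allowed in an acceleration datum in the sense of Definition~\ref{def:accdatum}, which requires non-degenerate Hamiltonians. You cannot fix this by perturbing $H_n$ outside $U$: there $\omega\neq\omega'$, so the identification $X^\omega_H=X^{\omega'}_H$, which drives your whole argument, would be lost. Saying that ``regularity and compactness need care'' understates the problem. The usual Fredholm and compactness package is unavailable for cylinders that are asymptotic to, or break along, these orbits.

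The paper's remedy is an action cut-off. The constant orbits on $M\setminus U$ have the same action for both forms, namely the constant value $n$. So at level $b_N<N$ one uses only $H_n$ with $n\ge N$ and works with $CF^*_{<b_N}(H_n)$. Lemma~\ref{lem:nobreak}, taken from \cite{SUV}, shows that cylinders between orbits of action $<b_N$ cannot break along orbits of action $\ge b_N$. Hence differentials and continuation maps are well defined on these truncated complexes. Dropping the first $N-1$ Hamiltonians at level $b_N$ is also what gives the uniformity across the telescope that you flagged but left vague.

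Once you truncate, the picture of ``the same complex with two cofinal filtrations'' is no longer literally available, because the $\omega$-truncated and $\omega'$-truncated telescopes have different generators. Your cofinality estimate is the right key input; it is exactly what the paper's sequence $c_N$, with $b_N-c_N\to\infty$, encodes. But it must be used to compare different complexes. The paper takes the complex generated by orbits with both actions $<b_N$ and projects it onto the $\omega$-telescope truncated at $b_N-c_N$. It then shows, via the trivial Mittag-Leffler condition, that the homotopy inverse limit of the kernels is acyclic. An interleaving of the two truncated inverse systems built from your $\phi$ would do the same job.

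Finally, you still need that this degenerate, truncated model computes $SH^*_M(K,\omega)$ as actually defined with non-degenerate data. The paper does this with three ingredients:
\begin{enumerate}
\item a sandwich $H_n-1/4<G_n<H_n<G_n+1$ against non-degenerate $G_n$;
\item the fact that continuation along a constant translation is an unfiltered quasi-isomorphism;
\item the fact that inverse telescopes preserve levelwise quasi-isomorphisms, which is why it uses $\tel^*_{\leftarrow}$ rather than the plain $\varprojlim$.
\end{enumerate}
None of these steps appear in your outline, and together with the truncation they make up most of the proof.
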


Recall that a symplectic form is aspherical if it vanishes on $\pi_2(M)$. Without this condition, the above statement is not true.

\begin{example}
    Let $(S^2,\omega)$ be the two sphere with an area form. Let $K$ be a disk in $S^2$ with more than half of the total area. Then $SH^{*}_{S^2}(K,\omega)\neq 0$. Set $\omega'=(1+\rho)\omega$ where $\rho$ is a bump function supported in $S^2\setminus K$. When the integral of $\rho$ is large enough, the set $K$ becomes displaceable in $(S^2,\omega')$, which implies that $SH^{*}_{S^2}(K,\omega')=0$. 
\end{example}

The $U$-comparable condition is satisfied under certain topological assumptions.

\begin{example}
    Let $\omega$ and $\omega'$ be two aspherical symplectic forms on $M$ with $\omega=\omega'$ on $U$. 
    \begin{itemize}
        \item If $U$ is incompressible in $M$, meaning that the inclusion-induced map $\pi_1(U)\to \pi_1(M)$ is injective, then $\omega$ and $\omega'$ are $U$-comparable.
        \item If $\omega$ and $\omega'$ are positively proportional on $\pi_2(M,U)$ then they are $U$-comparable.
    \end{itemize}
\end{example}

In the particular case where $M$ is a Riemann surface with positive genus, one can verify the above conditions by using surface topology. This matches the known results that symplectic cohomology with support on a positive genus surface is topological; see \cite[Theorem 1.31]{TVar} and \cite[Theorem 1.48]{DGPZ}. We also remark that the $U$-comparable condition is on some $U$, not on $K$.

\begin{remark}
    We state Theorem \ref{thm:main} for closed manifolds, while the proof works for open manifolds with both $\omega,\omega'$ being convex at infinity and $\omega-\omega'$ having compact support. For open manifolds, certain (non-)invariance results are known; for example, see \cite{GM,Gong,Ritter,BeRi}.
    
\end{remark}

\subsection{Outlook}

Now we provide more motivation for Theorem \ref{thm:main}.

\subsubsection{Heaviness}

In \cite{EP09}, Entov-Polterovich introduced the notion of heaviness, which reveals a remarkable hierarchy of rigidity among compact subsets of symplectic manifolds. It is known that symplectic cohomology with support determines the heaviness of a compact set \cite{OS, MSV}. Therefore, we expect the following.

\begin{corollary}\label{co:heavy}
    Under the setup of Theorem \ref{thm:main}, we have that $K$ is $\omega$-heavy if and only if it is $\omega'$-heavy.
\end{corollary}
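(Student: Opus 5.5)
The plan is to deduce Corollary \ref{co:heavy} from Theorem \ref{thm:main} by way of the known characterization of heaviness through symplectic cohomology with support \cite{OS, MSV}. In the form proved there, for a closed symplectic manifold $(M,\omega)$ and a compact $K\subset M$, $K$ is $\omega$-heavy if and only if the restriction map
\[
SH^*_M(M,\omega)\to SH^*_M(K,\omega)
\]
is nonzero. Equivalently, the unit does not go to zero: since $SH^*_M(M,\omega)\cong H^*(M)$ (with Novikov coefficients), this says $1\neq 0$ in $SH^*_M(K,\omega)$. This also uses that the map is a unital ring map, so it is nonzero exactly when the target is a nonzero ring.

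So heaviness becomes a pure vanishing statement: $K$ is $\omega$-heavy if and only if $SH^*_M(K,\omega)\neq 0$. With this reformulation in hand, I would apply Theorem \ref{thm:main}. It gives a group isomorphism $SH^*_M(K,\omega)\cong SH^*_M(K,\omega')$, and a group isomorphism preserves whether a group is zero. Hence $SH^*_M(K,\omega)\neq 0$ if and only if $SH^*_M(K,\omega')\neq 0$, which is the claim.

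The key steps, in order, are:
\begin{enumerate}[(i)]
\item Recall from \cite{MSV} the equivalence between heaviness and nonvanishing of the restriction of the unit, for the coefficient ring used in this paper.
\item Observe that $SH^*_M(K,\omega)$ is a unital ring and the restriction map is unital. Therefore the unit maps to a nonzero element exactly when $SH^*_M(K,\omega)\neq 0$.
\item Invoke Theorem \ref{thm:main}.
\end{enumerate}

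The main obstacle is step (i): making sure the cited criterion applies in exactly our setting. The hypotheses to check are closed $M$, the same Novikov field, and the same (possibly completed) version of $SH^*_M(K)$. In particular, heaviness in \cite{EP09} is defined via the spectral invariants of $\omega$, so the criterion must be applied separately to $\omega$ and to $\omega'$.

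It is also conceivable that the criterion is stated only for the ring structure or the unit, rather than for bare nonvanishing. In that case the reduction in step (ii) handles it, as long as the unit of $SH^*_M(K,\omega)$ is nonzero whenever the ring is nonzero, which is automatic.

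If instead the available result states heaviness as ``$SH^*_M(K)\neq 0$'' only under a semipositivity or asphericity assumption, that is also fine here: both $\omega$ and $\omega'$ are aspherical by hypothesis.
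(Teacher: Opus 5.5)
Your reduction follows the route the paper has in mind: heaviness $\Leftrightarrow$ $SH^*_M(K)\neq 0$ via \cite{MSV}, then Theorem~\ref{thm:main}. However, the step you call the main obstacle is a genuine gap, and neither closedness nor asphericity resolves it. Theorem~\ref{thm:main} is about the invariant built in Section~2: coefficients in $\ZZ$, completed with respect to the \emph{Hamiltonian action} filtration. The criterion of \cite{MSV} is proved for Varolgunes' original construction over the Novikov field, completed with respect to the \emph{Novikov} filtration. The same holds for the input from \cite{AGV} that your step (ii) relies on, namely a unital product on $SH^*_M(K)$ for which the restriction map from $SH^*_M(M)$ is unital. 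The paper warns that the two constructions ``give possibly different groups''. So an isomorphism between the action-filtered groups for $\omega$ and $\omega'$ says nothing by itself about vanishing of the Novikov-filtered groups that \cite{MSV} relates to heaviness. This is why the paper only says it \emph{expects} the corollary, and remarks that it depends on adapting \cite{AGV} to the action-filtration setting. You list ``the same version of $SH^*_M(K)$'' as a hypothesis to check, but it is not satisfied as stated.

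To complete the argument you need one of two bridges. The first is to redo the \cite{AGV}/\cite{MSV} package for the action-filtered theory. That means constructing the unital product and unital restriction map there, and proving that $K$ is heavy iff the unit of the action-filtered $SH^*_M(K)$ is nonzero. Asphericity should simplify this, since actions are cap-independent. The second is to prove Theorem~\ref{thm:main} for the Novikov version instead, but this does not follow from the paper's proof. There, the key point is that the counts $n(\gamma_-,\gamma_+)$ coincide for $\omega$ and $\omega'$. With Novikov weights, a cylinder leaving $U$ is weighted by its topological energy computed with $\omega$ or with $\omega'$ respectively. The obvious identification rescales each generator $\gamma$ by $T^{\cA^{\omega'}(\gamma)-\cA^{\omega}(\gamma)}$, which does not preserve the $\Lambda_{\geq 0}$-lattices whose completions define the invariant. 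A separate comparison argument would therefore be needed.

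Finally, mind the coefficients. Heaviness is defined via spectral invariants over a field, while Theorem~\ref{thm:main} is stated over $\ZZ$. Nonvanishing of a completed telescope over $\ZZ$ does not formally transfer to a field, because of torsion and because inverse limits do not commute with tensor products. You should rerun the proof of Theorem~\ref{thm:main} with field coefficients rather than quote it as stated; the chain-level argument should go through unchanged.
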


\begin{remark}
    Strictly speaking, our Floer theory uses the Hamiltonian action filtration while \cite{AGV,MSV} uses a Novikov filtration. The above corollary depends on a modification of \cite{AGV} to the action filtration case, which should be much easier under our aspherical condition.
\end{remark}

The theory of heavy sets has many geometric and dynamical applications; see \cite{Entov} for a nice survey. In view of Corollary \ref{co:heavy}, these consequences are preserved under a suitable change of symplectic forms. For example, \cite[Proposition 6]{HLRS} gives a topological description of heavy sets in positive genus surfaces. Furthermore, we are interested in how the Hofer and spectral geometry of Hamiltonian groups change when we change the symplectic form. This will be pursued elsewhere.

\begin{remark}\label{re:semi}
    In \cite{Zhang}, Zhang studied how Hamiltonian spectral invariants change under the change of symplectic structures. It would be interesting to use the parametrized Floer continuation maps therein to deduce semi-continuity of symplectic cohomology with support on non-aspherical manifolds.
\end{remark}

\subsubsection{Relative Floer theory}

Let $(M,\omega)$ be a closed symplectic manifold and $D$ be a suitable symplectic divisor that supports the Poincar\'e dual of $[\omega]$. The complement of $D$ in $M$ can be made into a Liouville domain $W$. The symplectic topology and Floer theories between $W$ and $M$ provide an effective method \cite{Se} to study the homological mirror symmetry conjecture. In particular, Perutz-Sheridan \cite{PerutzSheridan} defined a relative Fukaya category $\mathcal{F}(W\subset M)$. If one varies the symplectic form outside $W$, the category has several nice invariance properties. One of our motivations is to realize similar properties for symplectic cohomology with support.

In this article, we work on the aspherical case and consider Floer theory over the integers. It should be fruitful to study symplectic cohomology with support over (several-variable) Novikov rings and establish a change of variable formula under the perturbation of symplectic structures; see \cite{M,PerutzSheridan}. Specializing to a particular symplectic form gives a map from the several-variable Novikov ring to the universal Novikov ring. This should be related to the semi-continuity phenomenon in Remark \ref{re:semi}.

\subsubsection{Sketch of proof}

In order to compare $SH^*_M(K,\omega)$ with $SH^*_M(K,\omega')$, we will use a fixed acceleration datum $\cD=(H_{s,t},J_{s,t})$ for both $\omega$ and $\omega'$. The adjacent condition implies that such a family of almost complex structures $J_{s,t}$ exists. However, the Hamiltonian vector fields for $H_{s,t}$ depend on the symplectic structure. To solve this, we assume that the functions are constant on $M\setminus U$. Then there is a canonical correspondence between the Hamiltonian vector fields for $H_{s,t}$ with respect to $\omega$ and $\omega'$. The main difficulty in using such functions is that they have many degenerate constant orbits. We will use action cut-offs to ignore these degenerate orbits, motivated by \cite{M,CO,SUV}. This gives a correspondence between generators of action-truncated telescopes.

Next, by the special choice of Hamiltonian functions, the Floer equations are the same with respect to both $\omega$ and $\omega'$. In particular, the part of a Floer solution outside $U$ becomes pseudo-holomorphic; see Figure \ref{fig:outcurve}. This provides a correspondence between moduli spaces of Floer solutions to define differentials in the telescopes.

Therefore, we have obtained two action-truncated telescopes containing a common group of generators with the same differentials, but with different action filtrations. Finally, the $U$-comparable condition is used to compare these two filtrations and to show this common group of generators captures the whole homology, after taking inverse limits. This produces the unfiltered isomorphisms in Theorem \ref{thm:main}.

\subsection*{Acknowledgment}
I thank Mark McLean, Bogdan Simeonov, and Umut Varolgunes for helpful discussions. I am supported by EPSRC grant EP/W015889/1.

\begin{figure}
    \centering
\begin{tikzpicture}[yscale=0.5]
  \draw[thick] (-5,6)--(6,6)--(6,-5)--(-5,-5)--(-5,6);
  \node at (4,-3) {$M$};

  \begin{scope}[rotate=90]
  \draw[blue, thick] (0,0.8) ellipse [x radius=2.45, y radius=3.3];

  \draw[blue, thick] (0.05,1.35) circle [radius=1.0];


  \coordinate (L) at (-0.75,-1.15);
  \coordinate (R) at (1.55,0.10);

  \draw[thick]
    (-0.58,-1.02)
    
    .. controls (1,-3) and (6,-6) .. (1.38,-0.03);

  \draw[thick]
    (-0.92,-1.28)
    
    .. controls (1,-4) and (8,-8) .. (1.72,0.23);

  \draw[thick] (L) ellipse [x radius=0.22, y radius=0.12];

  \draw[thick] (R) ellipse [x radius=0.22, y radius=0.12];

  \node[blue] at (-1.45,0.6) {$U$};
  \node[blue] at (0.05,1.35) {$K$};

  \end{scope}

\end{tikzpicture}
\caption{A Floer solution traveling outside $U$.}
    \label{fig:outcurve}
\end{figure}

\section{Backgrounds}

Let $(M,\omega)$ be a closed manifold with an aspherical symplectic form. In this section, we review the construction of symplectic cohomology with support. 

\subsection{Hamiltonian Floer theory}

First we set up the background for Hamiltonian Floer theory. Standard references include \cite{HS,MS2,AD}. For a time-dependent Hamiltonian function $H_t:[0,1]\times M\to \RR$, its Hamiltonian vector field $X_{H_t}$ is determined by
\[
dH_t= \omega(X_{H_t},\cdot).
\]
Let $\gamma$ be a one-periodic orbit of $X_{H_t}$ that is contractible in $M$. A cap $u$ of $\gamma$ is a smooth map $u$ from the unit disk to $M$ bounding $\gamma$. The Hamiltonian action of $\gamma$ is defined as
\begin{equation}\label{eq:action}
    \cA_{H_t}(\gamma):= \int_0^1 H_t(\gamma(t))dt+ \int u^*\omega.
\end{equation}
Since $\omega$ is aspherical, the action of $\gamma$ does not depend on the cap $u$. Suppose that $\gamma$ is a non-degenerate orbit, using the cap $u$, we can define the Conley-Zehnder index $\CZ(\gamma,u)\in\ZZ$. It gives a well-defined $\ZZ/2$-grading of $\gamma$ by
\[
\deg(\gamma):=\dfrac{\dim M}{2}+\CZ(\gamma,u)\ \mod 2.
\]
If the first Chern class of $M$ vanishes on $\pi_2(M)$, then we get a well-defined $\ZZ$-grading.

Let $H_t$ be a non-degenerate Hamiltonian. We define
\begin{equation}
    CF^*(H_t):= \bigoplus_{\deg(\gamma)=*}\ZZ\langle\gamma\rangle
\end{equation}
as the free $\ZZ$-module generated by contractible one-periodic orbits of $H_t$. Given a family of tame almost complex structures $J_t$, one can consider the Floer equation
\begin{equation}\label{eq:Floer}
    \partial_sv+J_t(\partial_tv-X_{H_t})=0
\end{equation}
for smooth maps $v:\RR_s\times[0,1]_t\to M$ with asymptotics to contractible one-periodic orbits of $H_t$. A Floer solution $v$ is called regular if the linearized Floer operator of \eqref{eq:Floer} is surjective. The pair $(H_t, J_t)$ is called regular if all of its Floer solutions are regular. A regular pair $(H_t, J_t)$ gives an operator
\[
d: CF^*(H_t)\to CF^{*+1}(H_t)
\]
with $d\circ d=0$ by counting Floer solutions. We call $d$ a Floer differential.

Given two non-degenerate Hamiltonians $H_{0,t},H_{1,t}$ and a homotopy $H_{s,t}$ between them, we choose a family of almost complex structures $J_{s,t}$ such that $(H_{0,t},J_{0,t})$ and $(H_{1,t},J_{1,t})$ are regular. Then we consider a parametrized Floer equation
\begin{equation}\label{eq:paraFloer}
    \partial_sv+J_{s,t}(\partial_tv-X_{H_{s,t}})=0
\end{equation}
for $v$ with asymptotics to orbits of $H_{0,t},H_{1,t}$ respectively. The pair $(H_{s,t},J_{s,t})$ is called regular if all Floer solutions to \eqref{eq:paraFloer} are regular. A regular pair of homotopies defines a continuation operator
\[
h: CF^*(H_{0,t})\to CF^*(H_{1,t})
\]
which becomes a chain map. The homotopy $H_{s,t}$ is called monotone if $\partial_s H_{s,t}\geq 0$.

\begin{remark}
    In our sign convention, the operator $d$ does not decrease the Hamiltonian action. The same is true for $h$ if $H_{s,t}$ is monotone.
\end{remark}

\begin{theorem}
    [Theorem 5.1 in \cite{FHS}]\label{thm:trans1}
    Let $H_t$ be a non-degenerate Hamiltonian. The space of almost complex structures which make $(H_t, J_t)$ a regular pair is of second category.
\end{theorem}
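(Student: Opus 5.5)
The plan is to follow the standard Sard–Smale strategy of Floer–Hofer–Salamon. First I will set up a Banach manifold of pairs $(v,J)$. Here $v$ ranges over $W^{1,p}$ maps $\RR\times S^1\to M$ asymptotic to two fixed contractible one-periodic orbits $\gamma_\pm$ of $H_t$, and $J=J_t$ ranges over a Banach space of $C^\varepsilon$ (Floer's $\varepsilon$-norm) perturbations of a fixed $\omega$-tame family $J^0_t$. Tameness is an open condition, so a small neighbourhood still consists of tame structures. On this space I will consider the section $\mathcal{F}(v,J)=\partial_sv+J_t(\partial_tv-X_{H_t})$ of the $L^p$ bundle. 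Non-degeneracy of $H_t$ makes the linearization $D_v$ at a zero a Fredholm operator between weighted/unweighted Sobolev spaces, with index given by the difference of Conley--Zehnder indices. This is where non-degeneracy is essential.

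The heart of the argument is to show that the universal linearization
\[
(\xi,Y)\mapsto D_v\xi+Y_t(v)\,(\partial_tv-X_{H_t}(v))
\]
is surjective at every zero $(v,J)$ with $v$ non-constant in $s$. I will argue by contradiction. An annihilator $\eta\in L^q$ solves $D_v^*\eta=0$, so it is smooth and, by unique continuation, cannot vanish on an open set unless it vanishes identically. It must also pair to zero with every $Y(\partial_sv)$-type term, since $\partial_tv-X_H=J\partial_sv$. The key input is the existence of regular points: by FHS, the set of $(s,t)$ with $\partial_sv(s,t)\neq0$ and $v(s,t)\notin v(\RR\setminus\{s\},t)$, and with $v(s,t)\neq\gamma_\pm(t)$, is open and dense. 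This uses only that $v$ is not $s$-independent, together with the asymptotic behaviour. Near such a point I will choose $Y_t$ supported near $(t,v(s,t))$ so that $\langle\eta,Y\partial_sv\rangle>0$ locally. Since the point is injective, this localized choice contributes nowhere else, which forces $\eta=0$ there and hence everywhere.

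\textbf{This step is the main obstacle.} The $t$-dependence of $J_t$ is exactly what lets us localize in $(t,v)$ rather than only in $v$. Proving that regular points are dense is the delicate lemma: it requires ruling out $v(s,t)=v(s',t)$ on open sets via a unique-continuation argument for the difference of two solutions.

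Once universal surjectivity holds, the universal moduli space is a Banach manifold, and its projection to the $J$-space is Fredholm with index equal to $\operatorname{ind}D_v$. Sard--Smale then shows that regular values are residual for each pair $(\gamma_-,\gamma_+)$. The $s$-constant solutions are the trivial cylinders at the orbits themselves, and these are automatically regular by non-degeneracy. There are countably many pairs of orbits (finitely many, in fact), so the intersection of the resulting residual sets is again residual. A final Taubes-type argument will pass from $C^\varepsilon$ to $C^\infty$, by exhausting with energy bounds and compact subsets. This shows that the set of regular smooth $J_t$ is of second category.
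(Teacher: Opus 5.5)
Your outline is correct and is essentially the argument of Floer--Hofer--Salamon to which the paper defers (the paper gives no proof of its own, only the citation to \cite{FHS}, with tame structures covered by their Remark 5.3). The argument runs as follows: a universal moduli space over Floer's $C^\varepsilon$ perturbations, surjectivity of the universal linearization via density of regular points and unique continuation, Sard--Smale for each of the finitely many pairs of orbits, and a Taubes-type step to pass to $C^\infty$.
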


\begin{theorem}
    [Theorem 11.1.6 in \cite{AD}]\label{thm:trans2}
    Given two non-degenerate Hamiltonians $H_{0,t},H_{1,t}$ and a family of almost complex structures $J_{s,t}$ such that $(H_{0,t},J_{0,t})$ and $(H_{1,t},J_{1,t})$ are regular. The space of homotopies $H_{s,t}$ connecting $H_{0,t},H_{1,t}$ which make the pair $(H_{s,t},J_{s,t})$ regular is of second category.
\end{theorem}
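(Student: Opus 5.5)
The plan is the standard Sard--Smale argument with a universal moduli space, where the perturbation parameter is the homotopy of Hamiltonians rather than the almost complex structure. Fix the ends $H_{0,t},H_{1,t}$ and the family $J_{s,t}$, and choose a fixed reference homotopy $H^{\mathrm{ref}}_{s,t}$ that is $s$-independent for $|s|\geq R$. I would take the space $\mathcal{H}$ of admissible homotopies to be $H^{\mathrm{ref}}_{s,t}+h_{s,t}$. Here $h$ is supported in $[-R,R]\times[0,1]\times M$ and lies in a Floer $C^\epsilon$-space: a separable Banach space of smooth functions with weighted norm $\sum_k \epsilon_k\norm{h}_{C^k}$, which is dense in $C^\infty$ in the $L^2$ sense.

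Fix contractible orbits $\gamma_0$ of $H_{0,t}$ and $\gamma_1$ of $H_{1,t}$, which are non-degenerate by assumption. Consider the universal moduli space $\mathcal{M}^{\mathrm{univ}}(\gamma_0,\gamma_1)$ of pairs $(v,h)$, with $v\in W^{1,p}$ asymptotic to $\gamma_0,\gamma_1$, solving \eqref{eq:paraFloer}. It is the zero set of a section $\mathcal{F}(v,h)=\partial_s v+J_{s,t}(\partial_t v-X_{H_{s,t}})$ of a Banach bundle $L^p(v^*TM)$. Non-degeneracy of the ends makes $D_v\mathcal{F}$ Fredholm. The key step is to show that the full linearization
\[
D\mathcal{F}(v,h)(\xi,\hat h)=D_v\xi - J_{s,t}X_{\hat h_{s,t}}(v)
\]
is onto at every zero. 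Since the image of $D_v$ is closed of finite codimension, it suffices to show that any $\eta\in L^q$ annihilating the image vanishes.

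Such an $\eta$ satisfies $D_v^*\eta=0$, so it is smooth and obeys unique continuation (Aronszajn/Carleman similarity). It also satisfies $\int\langle \eta, J_{s,t}X_{\hat h}(v)\rangle=0$ for all admissible $\hat h$. Here the $s$-dependence helps: $\hat h$ is a function of $(s,t,x)$, so the relevant map $(s,t)\mapsto (s,t,v(s,t))$ is automatically injective. We therefore need no somewhere-injectivity or regular-point analysis, unlike Theorem~\ref{thm:trans1}. Suppose $\eta(s_0,t_0)\neq0$ for some $(s_0,t_0)$ in the interior of $[-R,R]\times(0,1)$. Then I pick $\hat h$ as a bump near $(s_0,t_0,v(s_0,t_0))$ whose $\omega$-gradient matches $-J\eta$ there, which makes the integral positive, a contradiction. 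So $\eta$ vanishes on an open set and hence everywhere by unique continuation.

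Given surjectivity, $\mathcal{M}^{\mathrm{univ}}$ is a Banach manifold and the projection to $\mathcal{H}$ is Fredholm with index $\mathrm{ind}(D_v)$. Sard--Smale then gives a residual set of $h$ for which each $D_v$ is onto, i.e.\ the pair $(H_{s,t},J_{s,t})$ is regular for this pair of orbits. Intersecting over the countably many pairs $(\gamma_0,\gamma_1)$ is still residual. I expect the main obstacle to be the passage from the $C^\epsilon$ or $C^k$ space to the $C^\infty$ topology claimed in the statement. I would handle it with Taubes' trick: for each energy bound $E$, the set of homotopies regular for solutions of energy at most $E$ is open, by Gromov--Floer compactness (no bubbling by asphericity) together with openness of surjectivity. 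It is also dense, by the $C^\epsilon$ result. The countable intersection over $E\in\mathbb{N}$ is then of second category in $C^\infty$.
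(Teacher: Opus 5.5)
Your argument is correct, and it is essentially the standard proof behind the cited \cite{AD} result; the paper itself gives no proof and only cites it. You perturb the homotopy by $s$-dependent terms in Floer's $C^\epsilon$-space supported in a compact $s$-interval, use that $s$-dependence plus unique continuation to kill the cokernel of the universal linearization without any injectivity analysis, and then apply Sard--Smale.

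One correction to your optional $C^\infty$ upgrade: an energy bound does not rule out breaking, so "openness of surjectivity" alone does not show the energy-$\le E$ regular set is open. You need either linear gluing at broken limits, which is where the regularity of $(H_{0,t},J_{0,t})$ and $(H_{1,t},J_{1,t})$ would enter, or a uniform decay bound $|\partial_s v|\le Ke^{-|s|/K}$ in place of the energy bound, which prevents breaking.
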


If $H_{1,t}>H_{0,t}$, then we can choose the pair $(H_{s,t},J_{s,t})$ to be both regular and monotone, since perturbations could be as small as we want.

\begin{remark}
    There is no problem using tame almost complex structures instead of compatible ones, as pointed out in \cite[Remark 5.3]{FHS}.
\end{remark}

\subsection{Symplectic cohomology with support}

Let $K$ be a compact subset of $M$. The symplectic cohomology with support $SH^*_M(K)$ is first defined in \cite{Var,BSV}. A reduced version is defined in \cite{Groman}, and another version is defined in \cite{CO} for Liouville cobordisms, which also works for compact subsets of aspherical manifolds. We expect that the proofs in this article can be modified to consider these two versions, too. Now we review the definitions.

\begin{definition}
    A Hamiltonian one-ray consists of the following data.
    \begin{itemize}
        \item A sequence of non-degenerate Hamiltonian functions $H_{n,t}, n=1,2,\cdots$.
        \item Homotopies $\{H_{s,t}\}_{s\in[1,\infty)}$ connecting $H_{n,t},H_{n+1,t}$ for any $n$. That is, $H_{s,t}\mid_{s=n}=H_{n,t}$.
        \item A family $\{J_{s,t}\}_{s\in[1,\infty)}$ of tame time-dependent almost complex structures.
    \end{itemize}
    A Hamiltonian one-ray is called monotone if $\partial_sH_{s,t}\geq 0$.
\end{definition}

\begin{definition}\label{def:accdatum}
    For a compact subset $K$ of $M$, an acceleration datum $\cD$ for $K$ is a monotone Hamiltonian one-ray which satisfies the following.
    \begin{itemize}
        \item The Hamiltonians $H_{n,t}$'s approximate from below the lower semi-continuous function which is zero on $K$ and positive infinity outside $K$.
        \item The two families $\{H_{s,t}\}, \{J_{s,t}\}$ are regular to define all the Floer differentials $d_n:CF^*(H_{n,t})\to CF^{*+1}(H_{n,t})$ and continuation maps $h_n:CF^*(H_{n,t})\to CF^*(H_{n+1,t})$.
    \end{itemize}
\end{definition}

For any $K$, it is known that the acceleration data exist and any two of them could be connected; see \cite[Section 3.4]{AGV}. From now on, we write $H_n$ for $H_{n,t}$ for brevity.

Given an acceleration datum $\cD$, we obtain a Floer one-ray
\[
CF(H_1)\xrightarrow{h_1} CF(H_2)\xrightarrow{h_2} \cdots
\]
connected by continuation maps $h_n$ induced by the homotopies $\{H_s\}$. We form the telescope chain complex $\tel^*(\cD)$ as follows. As a graded $\ZZ$-module, it is defined as
$$
\tel^*(\cD):=\bigoplus_{n=1}^\infty(CF^*(H_{n})\oplus CF^*(H_{n})[1]).
$$ 
The telescope differential $\delta$ is defined as follows. 
If $x_{n}\in CF^k(H_{n})$ then
$$
\delta x_{n}= d_{n}x_{n} \in CF^{k+1}(H_{n}), 
$$
and if $x'_{n}\in CF^k(H_{n})[1]$ then
\begin{align}\label{eq:delta}
	\delta x'_{n}&= (x'_{n}, -d_{n}x'_{n}, -h_{n}x'_{n})\\
	&\in CF^{k}(H_{n})\oplus CF^{k+1}(H_{n})[1]\oplus CF^k(H_{n+1}).
\end{align}

Next we take a degree-wise completion of the telescope. 
Every Floer complex $CF^*(H_n)$ is equipped with an action filtration as in \eqref{eq:action}. 
Every element in $\tel^*(\cD)$ is a finite sum of elements from $CF^*(H_n)$'s. 
We define the action of such a sum as the smallest action among its summand, and call it the $\min$-action $\mathcal{A}$ on $\tel^*(\cD)$. 
For any number $a\in [-\infty, \infty)$, we have subcomplexes $CF^*_{\geq a}(H_n)$ and $\tel^*(\cD)_{\geq a}$ containing elements with action greater than or equal to $a$. 
For $a<b$ we form the quotients
$$
CF^*_{[a,b)}(H_n):=CF^*_{\geq a}(H_n)/CF^*_{\geq b}(H_n), \quad \tel^*(\cD)_{[a,b)}:= \tel^*(\cD)_{\geq a}/\tel^*(\cD)_{\geq b}
$$
and write $CF^*_{<b}(H_n):= CF^*_{(-\infty,b]}(H_n), \tel^*(\cD)_{<b}:= \tel^*(\cD)_{(-\infty,b)}$. From the direct sum definition of the telescope, we can see that
\[
\tel^*(\cD)_{[a,b)} =\tel^*(CF^*_{[a,b)}(H_1)\to CF^*_{[a,b)}(H_2)\to\cdots).
\]
These action truncated telescopes form a direct system in $a$ and an inverse system in $b$. The degree-wise completion of the telescope is
$$
\widehat{\tel^k}(\cD):= \varprojlim_b \varinjlim_a\tel^k(\cD)_{[a,b)}=\varprojlim_b \tel^k(\cD)_{<b}
$$
as $b\to +\infty$ and $a\to-\infty$. A typical element in $\widehat{\tel^k}(\cD)$ can be written as
\begin{equation}\label{eq:element}
    x= \sum_{i=1}^\infty x_i, \quad x_i\in \tel^k(\cD), \quad \lim_{i\to \infty} \mathcal{A}(x_i)= +\infty.
\end{equation}
Another way to view the completion is as follows. For any $x\in\tel^k(\cD)$, we can define a non-Archimedean norm
\[
\norm{x}_\mathcal{A}:= \exp(-\mathcal{A}(x)), \quad \norm{0}_\mathcal{A}:= +\infty.
\]
Then $\widehat{\tel^k}(\cD)$ is the completion of $\tel^k(\cD)$ as a normed space. An infinite sum in \eqref{eq:element} is exactly a convergent sum.

Since every term in the $\delta$-differential preserves the action, it extends naturally to all convergent sums. The symplectic cohomology with support on $K$ can be computed as the homology of the completed telescope 
\begin{equation}\label{eq:defSH}
    SH^*_M(K)=H(\widehat{\tel^*}(\cD)):=\ker\delta/\im\delta,
\end{equation} 
with respect to the extended $\delta$-differential. 

\begin{remark}
    The original definition \cite{Var} of symplectic cohomology with support uses a Novikov filtration. Here we use the Hamiltonian action filtration as in \cite{BSV,SUV}. The two constructions give possibly different groups. But the pattern of proofs to obtain properties is the same. For example, the Mayer-Vietoris and displacement properties of the action filtration version are proved in \cite{SUV}.
\end{remark}

\section{Proofs}

In this section, we prove Theorem \ref{thm:main}. Recall we have two adjacent aspherical symplectic forms $\omega,\omega'$ on $M$, which are identical in a neighborhood $U$ of $K$. The closure $\Bar{U}$ is a compact codimension-zero submanifold with boundary in $M$. We assume that $\Bar{U}$ is a proper subset of $M$; otherwise $\omega=\omega'$ globally.

\subsection{Degenerate acceleration data}

In the above definition of Floer complexes, we use non-degenerate Hamiltonians. Now we introduce a certain type of degenerate Hamiltonian functions, which can be used to establish Floer complexes and are more convenient to study the symplectic perturbation problem. Our construction is motivated by \cite[Section 3.4]{SUV}.

\begin{definition}
    Let $b$ be a positive real number. A Hamiltonian $H_t$ is called $b$-admissible if 
    \begin{itemize}
        \item Any contractible one-periodic orbit of $H_t$ is either contained in $U$ or $M\setminus U$.
        \item Contractible one-periodic orbits of $H_t$ that are in $M\setminus U$ have action larger than $b$.
        \item Contractible one-periodic orbits of $H_t$ with action less than $b$ are non-degenerate.
        \item Inside the free loop space $\mathcal{LS}(M):=C^\infty(S^1,M)$ with $C^0$-norm, the locus $\mathcal{P}_{< b}(H_t)$ of one-periodic orbits of action less than $b$ and the locus $\mathcal{P}_{\geq b}(H_t)$ of one-periodic orbits of action at least equal to $b$ are isolated in the sense that their $\epsilon$-neighborhoods are disjoint for some $\epsilon>0$.
    \end{itemize}
\end{definition}

If $H_t$ is $b$-admissible, then it is $b'$-admissible for any $b'<b$.

\begin{example}\label{exam:typical}
    Let $H_t$ be a Hamiltonian such that
    \begin{itemize}
        \item It is constant on $M\setminus U$ with a value larger than $b$.
        \item It is non-degenerate on $U$.
    \end{itemize} 
    Then $H_t$ is $b$-admissible. It will be a typical Hamiltonian that we use later to construct acceleration data; see Figure \ref{fig:typ}. Every point in $M\setminus U$ is a constant degenerate orbit with Hamiltonian action greater than $b$. Using this, we will show that the action-truncated Floer complex is well-defined. To construct such functions, we start with functions that are constant on $M\setminus U$ and perturb on $U$, which is allowed by \cite[Lemma 4.2.5]{Var}.
\end{example}

\begin{figure}
    \centering

\begin{tikzpicture}[yscale=0.8]

  \def\a{2.3}   
  \def\b{1.55}  
  \def\ytop{1.55}

  \draw[thick]
    (-4,3.2) -- (-2.3,3.2)
    .. controls (-2.0,3.2) and (-2.0,2.8) .. (-2.0,2.35)
    .. controls (-2.0,2.0) and (-1.6,1.95) .. (-1.35,2.10)
    .. controls (-1.05,2.28) and (-0.85,1.95) .. (-0.60,1.85)
    .. controls (-0.25,1.72) and (0.25,1.72) .. (0.60,1.85)
    .. controls (0.85,1.95) and (1.05,2.28) .. (1.35,2.10)
    .. controls (1.6,1.95) and (2.0,2.0) .. (2.0,2.35)
    .. controls (2.0,2.8) and (2.0,3.2) .. (2.3,3.2)
    -- (4,3.2);

  \draw[dashed, thick] (-\a,3.2) -- (-\a,0);
  \draw[dashed, thick] ( \a,3.2) -- ( \a,0);

  \draw[blue, thick] (0,0) ellipse [x radius=\a, y radius=\b];

  \draw[blue, thick] (0,0) circle [radius=0.72];

  \node[blue] at (0,0) {$K$};
  \node[blue] at (1.25,-0.2) {$U$};

\end{tikzpicture}

\caption{A typical Hamiltonian}
    \label{fig:typ}
\end{figure}

To exclude constant degenerate orbits, we need the following lemma.

\begin{lemma}\label{lem:nobreak}
    Let $b$ be a positive real number and $H_{0,t},H_{1,t}$ be two $b$-admissible Hamiltonians. Let $(H^k_{s,t}, J^k_{s,t}), k=1,2,\cdots$ be a sequence of monotone homotopies connecting $(H_{0,t},J_{0,t})$ with $(H_{1,t},J_{1,t})$ which converges to $(H^\infty_{s,t},J^\infty_{s,t})$. Let $v_k$ be a sequence of Floer solutions satisfying equation \eqref{eq:paraFloer} for $(H^k_{s,t}, J^k_{s,t})$ with fixed asymptotics whose actions less than $b$. Then, $v_k$ has a subsequence converging to a broken cylinder $v_\infty$ consisting of solutions to the Floer equation of $(H_{0,t},J_{0,t})$, then a solution to that of $(H^\infty_{s,t},J^\infty_{s,t})$ and then of $(H_{1,t},J_{1,t})$ (uniformly on all compact subsets, up to reparametrization as usual). In particular, all the one-periodic orbits that appear as an asymptotic condition in the broken solution automatically have action less than $b$.
\end{lemma}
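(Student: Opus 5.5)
Start from the usual Gromov--Floer compactness for the parametrized equation \eqref{eq:paraFloer}. Since $\omega$ is aspherical, no sphere bubbles can form. Floer cylinders have no boundary, so disk bubbling does not occur either. The energy of $v_k$ is controlled by the action difference of the fixed asymptotics plus the term $\int \partial_s H^k_{s,t}$. That term is uniformly bounded because the homotopies converge and are $s$-independent outside a compact set. So a subsequence of $v_k$ converges, uniformly on compact sets after reparametrization, to a broken configuration. It consists of finitely many Floer trajectories of $(H_{0,t},J_{0,t})$, one solution of $(H^\infty_{s,t},J^\infty_{s,t})$, and then finitely many trajectories of $(H_{1,t},J_{1,t})$. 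Consecutive pieces are joined along one-periodic orbits $\gamma$, some of which may be degenerate a priori. The content of the lemma is therefore the last sentence: every intermediate orbit has action $<b$. Non-degeneracy and the convergence statement then follow.

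For the action bound I will use monotonicity of action along the chain. By the sign convention (the Remark after the Floer setup), Floer trajectories and monotone continuation solutions do not decrease action. Let $x_-$ be the input asymptotic and $x_+$ the output, with $\cA(x_\pm)<b$. Every intermediate orbit $\gamma$ on the $H_0$ side then satisfies $\cA_{H_0}(x_-)\le \cA_{H_0}(\gamma)\le \cA(\text{next breaking orbit})\le\cdots\le \cA_{H_1}(x_+)<b$, because each piece has nonnegative energy. The same holds on the $H_1$ side. This argument is valid only because the limit pieces are honest solutions and the energy identity passes to the limit. Asphericity makes actions independent of caps, so the inequalities are unambiguous. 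Hence every breaking orbit lies in $\mathcal{P}_{<b}$ of $H_0$ or $H_1$.

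The main obstacle is justifying that the breaking really occurs at genuine one-periodic orbits and that the limits exist, given that $H_0,H_1$ have degenerate orbits. These sit in $M\setminus U$ with action $\ge b$. Standard compactness gives asymptotic convergence to a periodic orbit only along subsequences in the degenerate case. The limit of $v(s,\cdot)$ as $s\to\pm\infty$ could a priori drift along a degenerate family. Here I use the last admissibility condition: $\mathcal{P}_{<b}$ and $\mathcal{P}_{\ge b}$ have disjoint $\epsilon$-neighborhoods in the loop space. Suppose a finite-energy end of some piece accumulates at loops in $\mathcal{P}_{\ge b}$. Its action then tends to a value $\ge b$, since action is constant on connected components of the orbit set and continuous on it. This contradicts the inequality chain above, which bounds the action by $\cA(x_+)<b$. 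So every end accumulates only on $\mathcal{P}_{<b}$. Those orbits are non-degenerate and therefore isolated, so the end converges exponentially to a single non-degenerate orbit. The $\epsilon$-separation also ensures that no sequence of loops $v_k(s_k,\cdot)$ can pass from near $\mathcal{P}_{<b}$ to near $\mathcal{P}_{\ge b}$ without losing energy bounded below. This rules out pieces hiding in between. With all breaking points non-degenerate and of action $<b$, the standard gluing-free compactness argument concludes.
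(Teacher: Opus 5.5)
Your proposal is correct and takes essentially the same route as the paper, which defers to \cite[Lemma 3.6]{SUV} and notes that only compactness and action considerations are needed. One thing to tidy up is the order of the argument. The bound $\cA\le \cA(x_+)<b$ at the ends of each limit piece should come directly from the fact that $s\mapsto \cA_{H^k_{s,t}}(v_k(s,\cdot))$ is nondecreasing along every $v_k$, so each piece, being a $C^\infty_{loc}$-limit of translates, has action in $[\cA(x_-),\cA(x_+)]$ at all times; as written, your chain presupposes that the pieces are already joined at orbits, which is the very fact you then use this bound to prove.
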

\begin{proof}
    This is a reformulation of \cite[Lemma 3.6]{SUV}. We remark that our definition of $b$-admissible Hamiltonians is slightly more general than that in \cite{SUV}, which requires the function to be constant in some region; see Example \ref{exam:typical}. However, the proof does not need this. We also remark that the proof only uses compactness and action considerations and does not assume transversality. 
\end{proof}

Now let $H_t$ be a $b$-admissible Hamiltonian and let $CF^*_{<b}(H_t)$ be the free $\ZZ$-module generated by contractible one-periodic orbits with actions less than $b$. By assumption, all generators are in $U$ and non-degenerate. For a family $J_t$ and two orbits $\gamma_\pm\in CF^*_{<b}(H_t)$, we count the index-one Floer solutions to \eqref{eq:Floer} with asymptotics $\gamma_\pm$. Since $\gamma_\pm$ are non-degenerate, we can choose $J_t$ generic to make all Floer solutions regular. Therefore, we can define an operator
\[
d:CF^*_{<b}(H_t)\to CF^{*+1}_{<b}(H_t), \quad \gamma_-\mapsto \sum_{\gamma_+}n(\gamma_-,\gamma_+)\gamma_+
\]
where $n(\gamma_-,\gamma_+)\in \ZZ$ is the signed count of Floer solutions. Then we consider the moduli space of index-two Floer solutions with asymptotics in $CF^*_{<b}(H_t)$. By Lemma \ref{lem:nobreak}, the moduli space admits a compactification whose boundary consists of broken Floer solutions with all asymptotics in $CF^*_{<b}(H_t)$. This gives $d\circ d=0$ by the usual Floer argument.

Similarly, let $H_{0,t}<H_{1,t}$ be two $b$-admissible Hamiltonians, and let $H_{s,t}$ be a monotone homotopy connecting them. We choose a generic pair $(H_{s,t},J_{s,t})$ to count index-zero solutions to the parametrized Floer equation \eqref{eq:paraFloer} with asymptotics in $CF^*_{<b}(H_{0,t})$ and $CF^*_{<b}(H_{1,t})$ respectively. The same action argument enables us to define a continuation operator
\[
h:CF^*_{<b}(H_{0,t})\to CF^*_{<b}(H_{1,t})
\]
and Lemma \ref{lem:nobreak} shows that $h$ is a chain map.

Suppose we have three $b$-admissible Hamiltonians $H_{0,t}<H_{1,t}<H_{2,t}$, and three monotone homotopies connecting any two of them. The same argument as above shows that there exists a chain homotopy between the induced continuation operators
\[
\begin{aligned}
    & h_{02}:CF^*_{<b}(H_{0,t})\to CF^*_{<b}(H_{2,t})\\
    & h_{12}\circ h_{01}:CF^*_{<b}(H_{0,t})\to CF^*_{<b}(H_{1,t})\to CF^*_{<b}(H_{2,t}).
\end{aligned}
\]
The above definitions of the Floer differential and continuation map have a well-defined restriction to the subcomplex $CF^*_{[a,b)}(H_t)$ of $CF^*_{(-\infty,b)}(H_t)$, for any $a<b$.

Now we are ready to construct acceleration data by using possibly degenerate functions.

\begin{definition}
    Let $b$ be a positive real number. A $b$-admissible acceleration datum $\cD^b$ for $K$ is an acceleration datum as in Definition \ref{def:accdatum} such that all Hamiltonian functions $H_{n,t}$ are $b$-admissible.
\end{definition}

Given a $b$-admissible acceleration datum $\cD^b$, there is a Floer one-ray
\[
CF^*_{<b}(H_{1,t})\to CF^*_{<b}(H_{2,t})\to CF^*_{<b}(H_{3,t})\to\cdots
\]
where Floer differentials and continuation maps are defined as described above. Therefore, we can define the telescope
\begin{equation}
    \tel^*(\cD^b):=\tel^*(CF^*_{<b}(H_{1,t})\to CF^*_{<b}(H_{2,t})\to CF^*_{<b}(H_{3,t})\to\cdots).
\end{equation}

Next, we need to take the inverse limit of $\tel^*(\cD^b)$ as $b$ goes to positive infinity. 

\begin{definition}\label{def:pbdata}
    A possibly degenerate acceleration data $\cD^\mathfrak{b}_{\deg}$ for $K$ consists of
    \begin{itemize}
        \item a sequence $\mathfrak{b}$ of numbers $b_1<b_2<\cdots$ going to positive infinity, and
        \item an acceleration datum  $(H_{s,t},J_{s,t})_{s\in[1,+\infty)}$ such that for every $N=1,2,\cdots$, the sub-acceleration datum 
        \[
        \cD^{b_N}_{\deg} :=(H_{s,t},J_{s,t})_{s\in[N,+\infty)}
        \]
        is $b_N$-admissible.
    \end{itemize} 
\end{definition}

One can construct such acceleration data by using typical functions in Example \ref{exam:typical}. A particular construction will be presented later. For every $N$, there is a map
\[
\tel^*(\cD^{b_{N+1}}_{\deg})\to \tel^*(\cD^{b_{N}}_{\deg})
\]
which is the composition of the action truncation projection on Floer complexes, followed by the natural inclusion. More precisely, we have two telescopes
\[
\begin{aligned}
    & \tel^*(\cD^{b_{N}}_{\deg})=\tel^*(CF^*_{<b_N}(H_{N,t})\to CF^*_{<b_N}(H_{N+1,t})\to\cdots),\\
    & \tel^*(\cD^{b_{N+1}}_{\deg})=\tel^*(CF^*_{<b_{N+1}}(H_{N+1,t})\to CF^*_{<b_{N+1}}(H_{N+2,t})\to\cdots),
\end{aligned}
\]
on each summand $CF^*_{<b_{N+1}}(H_{N+1+k,t})$ of the telescope $\tel^*(\cD^{b_{N+1}}_{\deg})$, there is a projection
\[
CF^*_{<b_{N+1}}(H_{N+1+k,t})\to CF^*_{<b_{N}}(H_{N+1+k,t})
\]
which is action quotient in the chain level. Then we include $CF^*_{<b_{N}}(H_{N+1+k,t})$ to the telescope $\tel^*(\cD^{b_{N}}_{\deg})$. Since projection and inclusion maps are chain maps, these telescopes form an inverse system of chain complexes
\begin{equation}\label{eq:homoinverse1}
    \cdots\leftarrow \tel^*(\cD^{b_{N}}_{\deg})\leftarrow\tel^*(\cD^{b_{N+1}}_{\deg})\leftarrow\cdots.
\end{equation}
For algebraic reasons that we will see later, we use the inverse telescope
\[
\tel^*_{\leftarrow}(\cdots\leftarrow \tel^*(\cD^{b_{N}}_{\deg})\leftarrow\tel^*(\cD^{b_{N+1}}_{\deg})\leftarrow\cdots)
\]
to take the homotopy inverse limit of this system. The definition of the inverse telescope and related algebraic lemmas are reviewed in Section \ref{sec:homoinverse}.

\subsection{Comparison of Floer solutions}

Choose a particular acceleration data $\cD^\mathfrak{b}_{\deg}$ for $K$ such that 
\begin{itemize}
    \item $H_{n,t}$ satisfies Example \ref{exam:typical} for any $n=1,2,\cdots$, and
    \item $H_{s,t}=H_{1,t}+(s-1)$ on $M\setminus U$ for any $s\in[1,+\infty)$.
\end{itemize}
In particular, on $M\setminus U$, the function $H_{s,t}$ is a shift of a constant.

\begin{lemma}\label{lem:degdata}
    There exist acceleration data that both satisfy the above conditions and are regular to define the telescopes $\tel^*(\cD^{b_{N}}_{\deg})$.
\end{lemma}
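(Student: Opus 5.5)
We will construct the datum in three stages: first the functions $H_{n,t}$ and the sequence $\mathfrak{b}$, then the homotopies, and finally the almost complex structures. The main point is that every perturbation needed for regularity is supported in $U$. We choose the $J_{s,t}$ in $\cJ_\tau(M,\omega)\cap\cJ_\tau(M,\omega')$, which is possible by adjacency; this is what Section 3 will need later. Any tame $J$ suffices for the Floer theory, by the remark after Theorem \ref{thm:trans2}.

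\emph{Step 1: the functions.} Fix a sequence $K\subset\cdots\subset U_3\subset U_2\subset U_1\subset U$ of open neighbourhoods shrinking to $K$. Let $F_n$ be a smooth function with the following properties: $F_n$ is close to $0$ on $K$ (say in $[-1/n,0]$), $F_n\geq n$ on $M\setminus U_n$, and $F_n$ is constant, equal to $c+n-1$, on $M\setminus U$. Here $c$ is a constant chosen so that the value on $M\setminus U$ exceeds everything else, and we arrange $F_n<F_{n+1}$. We then perturb $F_n$ inside $U$ by \cite[Lemma 4.2.5]{Var} to get a Hamiltonian $H_{n,t}$ that is non-degenerate on $U$, still constant $c+n-1$ on $M\setminus U$, and still satisfies $H_{n,t}<H_{n+1,t}$. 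The perturbation can be kept $C^2$-small with support in a compact subset of $U$. One point needs care: no non-constant orbit of $H_{n,t}$ may cross $\partial U$. This holds because $X_{H_{n,t}}=0$ near $\partial U$ once the perturbation is supported away from $\partial U$, so no orbit can leave the region where the flow is nontrivial.

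Take $b_N=c+N-2$. Then $H_{n,t}$ for $n\geq N$ is constant $>b_N$ on $M\setminus U$, so it fits Example \ref{exam:typical} and is $b_N$-admissible. The isolation condition holds because the non-degenerate orbits of action $<b_N$ are finite in number and lie at positive $C^0$-distance from the constant orbits in $M\setminus U$, whose actions are all $\geq c+n-1>b_N$. By construction the $H_{n,t}$ approximate the required lower semicontinuous function from below.

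\emph{Step 2: the homotopies and transversality.} We start from a monotone homotopy of the form $H^0_{s,t}=H_{n,t}+\beta(s)\,(H_{n+1,t}-H_{n,t})$ with $\beta$ nondecreasing. On $M\setminus U$ this is $c+n-1+\beta(s)$. To match the prescribed form $H_{1,t}+(s-1)$ we simply choose $\beta(s)=s-n$ on $[n,n+1]$. This homotopy is strictly monotone wherever $H_{n+1,t}>H_{n,t}$.

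Regularity is then obtained as follows:
\begin{itemize}
\item For the Floer differentials, we choose $J_{n,t}$ generic by Theorem \ref{thm:trans1}. Genericity is a second-category condition inside the open set $\cJ_\tau(M,\omega)\cap\cJ_\tau(M,\omega')$, so it can be achieved there.
\item For the continuation maps, we perturb $H_{s,t}$ in a small neighbourhood of $\overline{U'}\times[n,n+1]\times S^1$, where $U'\Subset U$ contains all orbits of action $<b_N$. This uses Theorem \ref{thm:trans2}, keeping the perturbation small enough that monotonicity survives (strict monotonicity can be assumed there).
\end{itemize}

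\emph{Main obstacle.} We expect the hardest step to be justifying that perturbations supported only in $U$ suffice for transversality. Theorems \ref{thm:trans1} and \ref{thm:trans2} are stated for perturbations over all of $M$. The standard somewhere-injectivity argument needs each relevant Floer solution to pass through the region where the perturbation is allowed. Every relevant cylinder is asymptotic to orbits inside $U'$, since its ends have action $<b_N$, so its ends lie in $U'$ and it meets an open subset of $U'\times$(domain). A localized version of the transversality proof therefore applies: $J$ varies freely near the orbits inside $U$ for the differentials, and $H_{s,t}$ varies in $U$ for the continuation maps. Away from $U$ nothing is perturbed, so the conditions on $M\setminus U$ are preserved exactly.

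Finally, Lemma \ref{lem:nobreak} guarantees that only the orbits of action $<b_N$ appear in the compactifications. Hence this localized regularity is all that is needed to define every $\tel^*(\cD^{b_N}_{\deg})$, together with the projection maps between them.
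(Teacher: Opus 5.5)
Your construction is the paper's own route. The paper uses Hamiltonians constant on $M\setminus U$ whose value there goes up by exactly $1$ at each step; your $c+n-1$ and $b_N=c+N-2$ are its $n$ and $b_N=N-\tfrac12$ up to a shift. It then takes the linear interpolation, so that $H_{s,t}=H_{1,t}+(s-1)$ off $U$, and applies small perturbations from Theorems~\ref{thm:trans1} and~\ref{thm:trans2}, with $J_{s,t}$ taming both forms.

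Apart from routine details, the one step that fails as you justify it is the localization for the continuation maps. Your $H$-perturbation is supported near $\overline{U'}$ and only over the strip $s\in[n,n+1]$ where the homotopy may vary, since the ends $H_{n,t},H_{n+1,t}$ are fixed. The fact that the ends of a cylinder $v$ lie in $U'$ only gives points of the domain with $|s|$ large mapping into $U'$. Those points lie outside that strip.

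A continuation cylinder can stay in $M\setminus U$ for all $s\in[n,n+1]$, where it is $J_{s,t}$-holomorphic, as in Figure~\ref{fig:outcurve}. The energy identity only forces the actions of the two asymptotic orbits to differ by more than $1$, so nothing rules this out. For such a $v$ your perturbation vanishes identically near $v$. Hence $v$ still solves the perturbed equation with the same linearized operator, and no perturbation in your class can make it regular. The paper's one-line claim that the perturbations "could be chosen near the asymptotics" is no more precise on this point.

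The repair uses freedom you already set up. $J_{s,t}$ is constrained only by the open condition of taming $\omega$ and $\omega'$, so you may also perturb it over the strip on all of $M$.

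\emph{Case $\partial_sv\not\equiv0$ on the strip.} For $(s,t)$-dependent $J$ the variation term is $Y(\partial_tv-X_{H_{s,t}}(v))=YJ\partial_sv$. So a cokernel element vanishes near any point of the strip where $\partial_sv\neq0$, hence everywhere by unique continuation. No somewhere-injectivity is needed.

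\emph{Case $\partial_sv\equiv0$ on the strip.} Then $v(s,t)=\gamma(t)$ there, with $\gamma$ a one-periodic orbit of every $H_{s,t}$, $s\in[n,n+1]$, and hence of $H_{n,t}$ and $H_{n+1,t}$. The trivial cylinder over $\gamma$ then solves the continuation equation and agrees with $v$ on an open set. By unique continuation $v$ is that trivial cylinder and $\gamma=\gamma_\pm\subset U'$, which is exactly the case your $H$-perturbation near $\overline{U'}$ sees.

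Run Sard--Smale with both perturbations at once, each small. This leaves monotonicity, double tameness, and $H$ on $M\setminus U$ untouched, and gives the regular data the lemma asserts.
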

\begin{proof}
    First, choose $\mathfrak{b}=(1/2,3/2,\cdots)$ and $H_{n,t}=n$ on $M\setminus U$ for each $n$. Then we extend the constant functions to $U$ to construct $H_{n,t}$ such that 
    \begin{itemize}
        \item $H_{n,t}$ are non-degenerate on $U$.
        \item $H_{n,t}< H_{n+1,t}$ for any $n$.
        \item $H_{n,t}$ converges to zero on $K$ and diverges to positive infinity outside $K$.
    \end{itemize}
    Let $\widetilde{H}_{s,t}$ be the linear interpolation from $H_{n,t}$ to $H_{n+1,t}$. In order to define the structural maps in the telescope, we only need to consider Floer solutions with asymptotics in $U$, which are all non-degenerate. Start with any family of almost complex structures $\widetilde{J}_{s,t}$, we use Theorem \ref{thm:trans1} and Theorem \ref{thm:trans2} to perturb $(\widetilde{H}_{s,t},\widetilde{J}_{s,t})$ to get regular $(H_{s,t},J_{s,t})$. The proofs therein show that the perturbations could be chosen near the asymptotics, which are all in $U$, and arbitrarily small to preserve the monotonicity of the homotopy.
\end{proof}

So far, our discussion is for a fixed symplectic form $\omega$. Now recall that we have another $\omega'$ which is adjacent to $\omega$ and has $\omega=\omega'$ on $U$. For any function $H_{n,t}$ in $\cD^\mathfrak{b}_{\deg}$, the Hamiltonian vector fields $X^\omega_{H_{n,t}}$ and $X^{\omega'}_{H_{n,t}}$ are the same. This provides a one-to-one correspondence of the one-periodic orbits. Moreover, the two (parametrized) Floer equations
\[
\partial_sv+J_{s,t}(\partial_tv-X^\omega_{H_{s,t}})=0, \quad \partial_sv+J_{s,t}(\partial_tv-X^{\omega'}_{H_{s,t}})=0
\]
become the same on $M$, which gives a one-to-one correspondence of the Floer solutions with suitable asymptotics. This is the main reason that we require functions $H_{s,t}$ being constant on $M\setminus U$. Note that by the adjacent condition, we can assume $J_{s,t}$ tame both $\omega$ and $\omega'$.

On the other hand, $\omega$ and $\omega'$ induce different Hamiltonian actions on contractible orbits. The orbits of $H_{n,t}$ outside $U$ are constant orbits, whose Hamiltonian actions do not depend on $\omega$ or $\omega'$. Therefore, if $H_{n,t}$ is $b$-admissible with respect to $\omega$, then it is also $b$-admissible with respect to $\omega'$. So, we can define both inverse systems \eqref{eq:homoinverse1} with respect to $\omega$ and $\omega'$ by using the same data $\cD^\mathfrak{b}_{\deg}$. To compare them, we introduce another telescope. For each $H_{n,t}$, we write
\[
CF^*_{<b}(H_{n,t};\omega\cap\omega'):= CF^*_{<b}(H_{n,t};\omega)\cap CF^*_{<b}(H_{n,t};\omega')
\]
as the free $\ZZ$-module generated by contractible one-periodic orbits with both $\omega$ and $\omega'$ actions less than $b$. 

\begin{lemma}
    There exist Floer differentials on $CF^*_{<b}(H_{n,t};\omega\cap\omega')$ and continuation maps $CF^*_{<b}(H_{n,t};\omega\cap\omega')\to CF^*_{<b}(H_{n+1,t};\omega\cap\omega')$ to define the telescope
    \[
    \tel^*(\cD^{b_{N}}_{\deg}; \omega\cap\omega'):=\tel^*(CF^*_{<b_N}(H_{N,t};\omega\cap\omega')\to CF^*_{<b_N}(H_{N+1,t};\omega\cap\omega')\to \cdots).
    \]
\end{lemma}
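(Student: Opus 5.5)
The plan is to realize $CF^*_{<b}(H_{n,t};\omega\cap\omega')$ as a subquotient of $CF^*_{<b}(H_{n,t};\omega)$ and to check that the Floer operators descend to it. The key observation is that a Floer solution $v$ connecting $\gamma_-$ to $\gamma_+$ gives two energy identities, one for each symplectic form. For the common data $(H_{s,t},J_{s,t})$, with $J_{s,t}$ taming both forms, the energies
\[
E_\omega(v)=\int |\partial_s v|^2_{\omega,J}\,ds\,dt, \qquad E_{\omega'}(v)=\int |\partial_s v|^2_{\omega',J}\,ds\,dt
\]
are both nonnegative. Here the metrics are obtained by symmetrizing $\omega(\cdot,J\cdot)$ and $\omega'(\cdot,J\cdot)$. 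Both energies are computed by the same action difference formula, corrected by $\int \partial_sH_{s,t}\ge 0$ in the monotone case. Since both forms are aspherical, the actions are well defined using any caps, and we get
\[
\cA^\omega(\gamma_+)\ge \cA^\omega(\gamma_-), \qquad \cA^{\omega'}(\gamma_+)\ge \cA^{\omega'}(\gamma_-)
\]
simultaneously, for both the differentials and the continuation maps. The generators of $CF^*_{<b}(H_{n,t};\omega\cap\omega')$ are all non-degenerate orbits in $U$, and the periodic orbits and Floer solutions are literally the same for $\omega$ and $\omega'$, as noted above. So I would define $d$ and $h$ by the same signed counts $n(\gamma_-,\gamma_+)$ used in the $\omega$-complex, restricted to $\gamma_\pm$ with both actions below $b$.

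Next I would check that these operators are well defined and satisfy the needed identities. Consider the set of generators with both actions $<b$. Its complement inside the generators of $CF^*_{<b}(H;\omega)$ consists of the orbits with $\omega'$-action $\ge b$. By the double monotonicity, this complement spans a subcomplex: anything $d$ or $h$ sends it to has $\omega'$-action $\ge b$ as well. Hence $CF^*_{<b}(H;\omega\cap\omega')$ is the quotient of $CF^*_{<b}(H;\omega)$ by the span of orbits with $\omega'$-action at least $b$. Equivalently, it is the quotient $CF^*_{<b}(H;\omega')/\langle \omega\text{-action}\ge b\rangle$. Then $d\circ d=0$ follows from the identity already established on $CF^*_{<b}(H;\omega)$ using Lemma \ref{lem:nobreak}. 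The chain-map property of $h$ and the chain homotopies for compositions descend to the quotient in the same way. This is exactly the structure needed to form the telescope $\tel^*(\cD^{b_N}_{\deg};\omega\cap\omega')$ with the formula \eqref{eq:delta}.

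The main obstacle is the energy identity for $\omega'$. The Floer equation involves $X^{\omega}_{H_{s,t}}$, which equals $X^{\omega'}_{H_{s,t}}$ only because $dH_{s,t}=0$ outside $U$ and $\omega=\omega'$ on $U$. I would verify pointwise that $\omega'(\partial_sv,\partial_tv-X_{H})=\omega'(\partial_s v,J\partial_s v)\ge0$, using tameness of $J_{s,t}$ for $\omega'$. For a tamed, non-compatible $J$ this quantity is positive, though not exactly $|\partial_s v|^2$, and that suffices. Integrating then gives $\cA^{\omega'}(\gamma_+)-\cA^{\omega'}(\gamma_-)=\int\omega'(\partial_sv,J\partial_sv)+\int\partial_sH\ge0$. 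The step $\omega'(\partial_sv, X_H)=-dH(\partial_s v)$ holds because both sides vanish outside $U$ and agree with the $\omega$ expression inside $U$. Finally, regularity needs no further input, since the moduli spaces coincide with those for $\omega$, whose genericity was arranged in Lemma \ref{lem:degdata}.
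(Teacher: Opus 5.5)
Your proposal is correct and follows essentially the paper's argument: the moduli spaces for $\omega$ and $\omega'$ are identical, and action monotonicity for both forms keeps any intermediate breaking orbit inside the intersection. You package this equivalently as the quotient of $CF^*_{<b}(H_{n,t};\omega)$ by the subcomplex spanned by orbits with $\omega'$-action $\ge b$. Your explicit check of the $\omega'$-energy identity, using $X^{\omega}_{H}=X^{\omega'}_{H}$ and the $\omega'$-tameness of $J_{s,t}$, fills in a step the paper leaves implicit.
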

\begin{proof}
    For $\gamma_\pm\in CF^*_{<b_n}(H_{n,t};\omega\cap\omega')$, we have that 
    \[
    \cA^\omega(\gamma_\pm)<b_n, \quad \cA^{\omega'}(\gamma_\pm)<b_n.
    \]
    By our special choice of Hamiltonians and the above discussion, the two operators
    \[
    \begin{aligned}
        d^\omega: CF^*_{<b_n}(H_{n,t};\omega)\to CF^{*+1}_{<b_n}(H_{n,t};\omega)\\
        d^{\omega'}: CF^*_{<b_n}(H_{n,t};\omega')\to CF^{*+1}_{<b_n}(H_{n,t};\omega')
    \end{aligned}
    \]
    are defined by the same count 
    \[
    \gamma_-\mapsto \sum_{\gamma_+\in CF^{*+1}_{<b_n}(H_{n,t};\omega\cap\omega')}n(\gamma_-,\gamma_+)\gamma_+
    \]
    when restricted to $CF^*_{<b_n}(H_{n,t};\omega\cap\omega')$. In other words, the number $n(\gamma_-,\gamma_+)$ is the same for $\omega$ and $\omega'$. This induces a well-defined operator $d$ on $CF^*_{<b_n}(H_{n,t};\omega\cap\omega')$. To show $d$ is a differential, we note that if a Floer solution with asymptotics $\gamma_\pm$ breaks along another orbit $\beta$, then the action of $\beta$ is less than $b_n$ with respect to both $\omega$ and $\omega'$. Therefore, $d$ is a differential follows from $d^\omega=d^{\omega'}$ being a differential.

    Similarly, the restriction of the old continuation maps
    \[
    h_n: CF^*_{<b_n}(H_{n,t};\omega)\to CF^{*}_{<b_n}(H_{n+1,t};\omega)
    \]
    give well-defined continuation maps that are chain maps and satisfy the homotopy property to define the telescope.
\end{proof}

Next, we assume that $\omega$ and $\omega'$ are $U$-comparable and define a sequence $c_N$ in the following way. Set $\mathcal{P}_N:= \bigoplus_{n=N}^\infty(CF^*(H_{n,t})\oplus CF^*(H_{n,t})[1])$. For an orbit $\gamma$ of $H_{n,t}$ and a cap $u$ of $\gamma$. The action difference
\[
    \cA_{H_{n,t}}^\omega(\gamma)-\cA_{H_{n,t}}^{\omega'}(\gamma)= \int u^*\omega- \int u^*\omega'
\]
is determined by the homotopy class $[u]\in\pi_2(M,U)$, since $\omega=\omega'$ on $U$. Consider the set
\[
\mathcal{P}_N^\omega:=\{\cA^\omega(x)\mid x\in \mathcal{P}_N, \cA^\omega(x)<b_N, \cA^{\omega'}(x)\geq b_N\}\subseteq\RR.
\]
The $U$-comparable condition implies that\footnote{The infimum of the empty set is defined as $+\infty$.} 
\begin{equation}\label{eq:actionbound}
    \inf\mathcal{P}_N^\omega>-\infty.
\end{equation}
Then we make the following definition: if $\mathcal{P}_N^\omega$ is empty then we define $c_N=0$; if $\mathcal{P}_N^\omega$ is not empty, then we define
\begin{equation}\label{eqbncn}
    b_N-c_N:=\inf \mathcal{P}_N^\omega=\inf\{\cA^\omega(x)\mid x\in \mathcal{P}_N, \cA^\omega(x)<b_N, \cA^{\omega'}(x)\geq b_N\}.
\end{equation}
By \eqref{eq:actionbound}, $b_N-c_N$ is a finite number which is not greater than $b_N$. Suppose that $b_N-c_N$ stays bounded as $b_N$ diverges to positive infinity. The definition of $c_N$ gives elements $x_N$ with $\cA^\omega(x_N)$ bounded but $\cA^{\omega'}(x_N)\geq b_N$ going to positive infinity, contradicting to the $U$-comparable condition. Moreover, we can check $b_N-c_N\leq b_{N+1}-c_{N+1}$ for any $N$.
\begin{enumerate}
        \item If $\mathcal{P}_{N+1}^\omega$ is empty, then $c_{N+1}=0$ and hence $b_{N+1}-c_{N+1}=b_{N+1}\geq b_{N}-c_{N}$.
        \item If $\mathcal{P}_{N+1}^\omega$ is not empty, then there exists an element $x\in\mathcal{P}_{N+1}$
        with $\cA^\omega(x)=b_{N+1}-c_{N+1}$ and $\cA^{\omega'}(x)\geq b_{N+1}$.
        \begin{itemize}
            \item If $\cA^\omega(x)\geq b_N$, then $b_{N+1}-c_{N+1}\geq b_N\geq b_N-c_N$.
            \item If $\cA^\omega(x)< b_N$, then $\cA^\omega(x)\in\mathcal{P}_N^\omega$ and $b_N-c_N=\inf\mathcal{P}_N^\omega\leq \cA^\omega(x)$.
        \end{itemize}
\end{enumerate}

\begin{lemma}\label{lem:sequence}
There is a projection map
\[
\tel^*(\cD^{b_{N}}_{\deg}; \omega\cap\omega') \xrightarrow{p_N}
\tel^*(CF^*_{<b_N-c_N}(H_{N,t};\omega)\to CF^*_{<b_N-c_N}(H_{N+1,t};\omega)\to \cdots)
\]
which provides a commutative square
\[
\begin{tikzcd}
        & \tel^*(\cD^{b_{N+1}}_{\deg}; \omega\cap\omega') \arrow[r] \arrow[d] &  \tel^*(CF^*_{<b_{N+1}-c_{N+1}}(H_{N+1,t};\omega)\to \cdots)  \arrow[d]\\
        & \tel^*(\cD^{b_{N}}_{\deg}; \omega\cap\omega')  \arrow[r] & \tel^*(CF^*_{<b_N-c_N}(H_{N,t};\omega)\to \cdots)
\end{tikzcd}
\]
for any $N$. 
\end{lemma}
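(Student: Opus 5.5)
The plan is to define $p_N$ summand by summand, as the action-truncation projection with respect to $\omega$. A generator of $\tel^*(\cD^{b_N}_{\deg};\omega\cap\omega')$ lies in some $CF^*_{<b_N}(H_{n,t};\omega\cap\omega')$ or its shift, with $n\ge N$. It is an orbit $\gamma$ with $\cA^\omega(\gamma)<b_N$ and $\cA^{\omega'}(\gamma)<b_N$.

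We set $p_N(\gamma)=\gamma$ if $\cA^\omega(\gamma)<b_N-c_N$ and $p_N(\gamma)=0$ otherwise, and do the same on the shifted copies. Since $c_N\ge 0$, we have $b_N-c_N\le b_N$. Hence, as graded modules, $p_N$ is the composite of two maps. The first is the inclusion $CF^*_{<b_N}(H_{n,t};\omega\cap\omega')\hookrightarrow CF^*_{<b_N}(H_{n,t};\omega)$. The second is the quotient $CF^*_{<b_N}(H_{n,t};\omega)\to CF^*_{<b_N-c_N}(H_{n,t};\omega)$ by the subcomplex $CF^*_{[b_N-c_N,b_N)}$.

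The key step is to show that the inclusion is compatible with the differentials, so that the intersection complex is a \emph{subcomplex} of the $\omega$-truncated telescope. The quotient is then automatically a chain map. The difficulty is that $d^\omega\gamma_-$, or $h_n\gamma_-$, might contain an orbit $\gamma_+$ with $\cA^\omega(\gamma_+)<b_N$ but $\cA^{\omega'}(\gamma_+)\ge b_N$. Such a term is dropped in the intersection complex but present in the $\omega$-complex.

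This is exactly where the choice of $c_N$ enters. Any such $\gamma_+$ gives an element of $\mathcal{P}_N$ whose $\omega$-action lies in $\mathcal{P}_N^\omega$. Therefore $\cA^\omega(\gamma_+)\ge \inf\mathcal{P}_N^\omega=b_N-c_N$, and these terms are killed by the truncation at $b_N-c_N$. If $\mathcal{P}_N^\omega$ is empty, no such term exists at all.

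Consequently, $p_N\circ\delta$ and $\delta\circ p_N$ agree on each generator. For generators of $\omega$-action $<b_N-c_N$, the differential $d$, the continuation $h_n$ and the identity component of $\delta$ all have the same coefficients in both complexes, by the preceding lemma. Any discrepancy consists of orbits of $\omega$-action at least $b_N-c_N$, and these vanish after projection. For generators of $\omega$-action at least $b_N-c_N$, monotonicity implies that $\delta$ does not decrease action. So $\delta\gamma$ has only terms of $\omega$-action at least $b_N-c_N$, and these also project to zero. This verification is the main obstacle. One must check carefully that the terms of $\delta x'_n$ landing in $CF(H_{n+1,t})$ (the $-h_n$ components) are handled by the same infimum. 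This works because $\mathcal{P}_N$ includes all $n\ge N$.

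For the commutative square, the vertical maps are the inverse-system maps: action truncation from $b_{N+1}$ to $b_N$ (respectively from $b_{N+1}-c_{N+1}$ to $b_N-c_N$), followed by inclusion of the tail of the telescope starting at $H_{N+1}$ into the one starting at $H_N$. The right vertical map is well defined because $b_N-c_N\le b_{N+1}-c_{N+1}$, as checked above.

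Both compositions send an orbit $\gamma$ of $H_{n,t}$, $n\ge N+1$, lying in the double truncation at $b_{N+1}$, to $\gamma$ if $\cA^\omega(\gamma)<b_N-c_N$ and $\cA^{\omega'}(\gamma)<b_N$ hold appropriately, and to $0$ otherwise. The one point to check is that the down-then-right route first requires $\cA^{\omega'}(\gamma)<b_N$, whereas right-then-down does not. Suppose $\cA^\omega(\gamma)<b_N-c_N\le b_N$ but $\cA^{\omega'}(\gamma)\ge b_N$. Then $\gamma\in\mathcal{P}_N$ and its $\omega$-action lies in $\mathcal{P}_N^\omega$, so $\cA^\omega(\gamma)\ge b_N-c_N$, a contradiction. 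Hence the two routes agree on generators, and the square commutes.
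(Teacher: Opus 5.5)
Your proposal is correct and matches the paper's proof: $p_N$ is the $\omega$-action truncation at $b_N-c_N$, and both the chain-map property and the commutativity of the square follow from action monotonicity together with the defining property of $c_N$ (on $\mathcal{P}_N$, $\cA^\omega<b_N-c_N$ forces $\cA^{\omega'}<b_N$), which you verify in more detail than the paper's ``we can check.'' One wording fix: the inclusion of the intersection complex into $CF^*_{<b_N}(H_{n,t};\omega)$ is not itself a chain map (the intersection complex is rather the quotient by the span of orbits with $\cA^{\omega'}\geq b_N$), so your sentence calling it the ``key step'' should be dropped; only the composite $p_N$ is a chain map, and your generator-by-generator check establishes exactly that.
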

\begin{proof}
By the definition of the telescope, we have 
\[
\tel^*(\cD^{b_{N}}_{\deg}; \omega\cap\omega')= \{x\in \mathcal{P}_N\mid \cA^\omega(x), \cA^{\omega'}(x)<b_N\}.
\]
Any $x\in \tel^*(\cD^{b_{N}}_{\deg}; \omega\cap\omega')$ can be written as $x=x_0+x_+$ where $\cA^\omega(x_0)<b_N-c_N$ and $\cA^\omega(x_+)\geq b_N-c_N$. The projection $p_N$ is defined as $p_N(x):=x_0$. The definition of $b_N-c_N$ makes $p_N$ surjective. Since Floer differentials does not decrease the action, we can check that $p_N$ is a chain map and gives the commutative square. Recall that the vertical maps in the square are defined as in \eqref{eq:homoinverse1}.

\end{proof}

The above construction gives a short exact sequence
\begin{equation}
\begin{aligned}
    0 & \to \ker(p_N)\to \tel^*(\cD^{b_{N}}_{\deg}; \omega\cap\omega')\\
    & \xrightarrow{p_N}\tel^*(CF^*_{<b_N-c_N}(H_{N,t};\omega)\to CF^*_{<b_N-c_N}(H_{N+1,t};\omega)\to \cdots)\to 0.
\end{aligned}
\end{equation}
Taking the homotopy inverse limit defined in Section \ref{sec:homoinverse}, we obtain a long exact sequence
\[
\begin{aligned}
    \cdots & \to H(\tel^*_{\leftarrow}(\ker(p_N)))\to H(\tel^*_{\leftarrow}(\tel^*(\cD^{b_{N}}_{\deg}; \omega\cap\omega')))\\
    & \to H(\tel^*_{\leftarrow}(\tel^*(CF^*_{<b_N-c_N}(H_{N,t};\omega)\to CF^*_{<b_N-c_N}(H_{N+1,t};\omega)\to \cdots)))\to \cdots.
\end{aligned}
\]
See Lemma \ref{lem:inverselong}.

\begin{lemma}
    $H(\tel^*_{\leftarrow}(\ker(p_N)))=0$.
\end{lemma}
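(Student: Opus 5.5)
The plan is to show that the inverse system $\{\ker(p_N)\}_N$ is \emph{pro-zero}: for every $N$ there is an $M\geq N$ such that the composite transition map $\ker(p_M)\to\ker(p_N)$ vanishes at the chain level. From this, the vanishing of the homology of the inverse telescope follows by the algebra of Section \ref{sec:homoinverse}.

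First I would describe $\ker(p_N)$ and its transition maps explicitly. By the proof of Lemma \ref{lem:sequence}, $\ker(p_N)$ is spanned by the summands $x\in\mathcal{P}_N$ with
\[
b_N-c_N\leq\cA^\omega(x)<b_N,\qquad \cA^{\omega'}(x)<b_N .
\]
The vertical maps of Lemma \ref{lem:sequence} are the action truncations (both $\omega$ and $\omega'$ actions cut at $b_N$) followed by the inclusion $\mathcal{P}_{N+1}\subset\mathcal{P}_N$. The commutative square then guarantees that they restrict to chain maps $\ker(p_{N+1})\to\ker(p_N)$. Directly, a generator with $\cA^\omega\geq b_{N+1}-c_{N+1}\geq b_N-c_N$ is either killed by the truncation or lands in $\ker(p_N)$; here I use the monotonicity $b_N-c_N\leq b_{N+1}-c_{N+1}$ checked above. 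Composing, the map $\ker(p_M)\to\ker(p_N)$ for $M>N$ is again "truncate at $b_N$, then include". This is because truncations at successively smaller levels compose to the truncation at the smallest level.

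The key step uses the fact, established above from the $U$-comparable condition, that $b_N-c_N\to+\infty$. Given $N$, I would choose $M$ with $b_M-c_M\geq b_N$. Every generator $x$ of $\ker(p_M)$ has $\cA^\omega(x)\geq b_M-c_M\geq b_N$, so truncation at $\omega$-action $b_N$ kills it. Hence the composite $\ker(p_M)\to\ker(p_N)$ is the zero chain map.

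Finally I would feed this into the homotopy inverse limit. The cleanest route is via the Milnor-type exact sequence
\[
0\to {\varprojlim}^1 H^{*-1}(\ker(p_N))\to H(\tel^*_{\leftarrow}(\ker(p_N)))\to\varprojlim H^*(\ker(p_N))\to 0,
\]
which I expect is available from Section \ref{sec:homoinverse}, or follows from the standard short exact sequence $0\to\tel_\leftarrow\to\prod_N\to\prod_N\to0$ defining the inverse telescope. The induced inverse system on homology is pro-zero. So $\varprojlim$ vanishes, since any compatible sequence has each entry equal to the image of a later entry under a zero map. It also satisfies Mittag-Leffler with all stable images zero, so ${\varprojlim}^1$ vanishes too. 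Therefore $H(\tel^*_{\leftarrow}(\ker(p_N)))=0$.

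The main obstacle is bookkeeping rather than a conceptual difficulty. I have to check carefully that the restricted vertical maps on $\ker(p_N)$ really are the truncate-and-include maps. This includes the $[1]$-shifted summands of the telescope and the fact that continuation terms raise $n$, so that the $\omega$-action lower bound is preserved. If the Milnor sequence is not stated in Section \ref{sec:homoinverse}, I would instead prove directly that the inverse telescope of a pro-zero system is acyclic. I would do this by passing to the cofinal subsequence $N_1<N_2<\cdots$ with $b_{N_{k+1}}-c_{N_{k+1}}\geq b_{N_k}$, on which all transition maps are zero. There the inverse telescope is visibly a product of cones of identity maps, hence contractible, and I would invoke cofinality invariance.
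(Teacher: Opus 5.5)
Your argument is correct and is essentially the paper's: both rest on the fact that $b_N-c_N\to+\infty$ forces the composite maps $\ker(p_M)\to\ker(p_N)$ to vanish at chain level for $M\gg N$. The only difference is the last algebraic step. The paper uses this trivial Mittag-Leffler property at chain level to make $\coker(f)$ vanish, then identifies the inverse telescope with $\varprojlim_N\ker(p_N)=0$ via Lemma~\ref{lem:homoquasi}; you instead pass to homology and apply the Milnor sequence (Lemma A.7 of \cite{BSV}, stated in the appendix), so your cofinality fallback is not needed.
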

\begin{proof}
    By definition, we have
    \[
    \ker(p_{N+k})=\{x\in \mathcal{P}_{N+k}\mid \cA^\omega(x), \cA^{\omega'}(x)<b_{N+k}, \cA^\omega(x)\geq b_{N+k}-c_{N+k}\}
    \]
    for any $k=0,1,\cdots$. Since $b_N-c_N$ goes to positive infinity as $N$ goes to infinity, the map 
    \[
    \ker(p_{N+k'})\to \ker(p_{N+k})
    \]
    becomes zero when $k'-k$ is large. Therefore, the inverse system $\ker(p_N)$ satisfies the trivial Mittag-Leffler condition \cite[Definition 3.5.6]{Weibel} and hence
    \[
    H(\tel^*_{\leftarrow}(\ker(p_N)))\cong H(\varprojlim_N(\ker(p_N)))
    \]
    where the right hand side is the homology of the usual inverse limit; see Lemma \ref{lem:homoquasi}. On the other hand, the usual inverse limit $\varprojlim_N(\ker(p_N))$ is zero, by the reason that $b_N-c_N$ goes to positive infinity as $N$ goes to infinity.
\end{proof}

Redo the above construction for $\omega'$. We also get a sequence $c'_N$ such that $b_N-c'_N$ diverges to positive infinity as $b_N$ diverges to positive infinity with good properties. To summarize, we have

\begin{corollary}\label{co:homocompare}
    There exist quasi-isomorphisms
    \[
    \begin{aligned}
        \tel^*_{\leftarrow}(\tel^*(\cD^{b_{N}}_{\deg}; \omega\cap\omega'))\to \tel^*_{\leftarrow}(\tel^*(CF^*_{<b_N-c_N}(H_{N,t};\omega)\to CF^*_{<b_N-c_N}(H_{N+1,t};\omega)\to \cdots))\\
        \tel^*_{\leftarrow}(\tel^*(\cD^{b_{N}}_{\deg}; \omega\cap\omega'))\to \tel^*_{\leftarrow}(\tel^*(CF^*_{<b_N-c'_N}(H_{N,t};\omega')\to CF^*_{<b_N-c'_N}(H_{N+1,t};\omega')\to \cdots)).
    \end{aligned}
    \]
    Therefore, the groups 
    \[
    \begin{aligned}
        & H(\tel^*_{\leftarrow}(\tel^*(CF^*_{<b_N-c_N}(H_{N,t};\omega)\to CF^*_{<b_N-c_N}(H_{N+1,t};\omega)\to \cdots)))\\
        & H(\tel^*_{\leftarrow}(\tel^*(CF^*_{<b_N-c'_N}(H_{N,t};\omega')\to CF^*_{<b_N-c'_N}(H_{N+1,t};\omega')\to \cdots)))
    \end{aligned}
    \]
    are isomorphic.
\end{corollary}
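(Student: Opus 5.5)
The corollary will follow by assembling the two preceding lemmas for $\omega$, repeating the construction with the roles of $\omega$ and $\omega'$ exchanged, and then reading off the long exact sequence. Throughout, I take the homotopy inverse limit $\tel^*_{\leftarrow}$ as a functor on inverse systems of chain complexes that turns levelwise short exact sequences into long exact sequences in homology (Lemma \ref{lem:inverselong}).

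\emph{Step 1: the map for $\omega$.} By Lemma \ref{lem:sequence}, the projections $p_N$ are surjective chain maps, and they commute with the structure maps of the two inverse systems. They therefore form a morphism of inverse systems, and it induces a chain map
\[
\tel^*_{\leftarrow}(p_N): \tel^*_{\leftarrow}(\tel^*(\cD^{b_{N}}_{\deg}; \omega\cap\omega'))\to \tel^*_{\leftarrow}(\tel^*(CF^*_{<b_N-c_N}(H_{N,t};\omega)\to \cdots)).
\]
The kernels $\ker(p_N)$ form an inverse subsystem, so we have a levelwise short exact sequence of inverse systems. Applying Lemma \ref{lem:inverselong} gives the long exact sequence displayed above. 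The preceding lemma says $H(\tel^*_{\leftarrow}(\ker(p_N)))=0$, so exactness forces $\tel^*_{\leftarrow}(p_N)$ to be an isomorphism on homology, i.e.\ a quasi-isomorphism.

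\emph{Step 2: the map for $\omega'$.} Here I rerun the construction with $\omega$ and $\omega'$ interchanged. The intersection complex $CF^*_{<b}(H_{n,t};\omega\cap\omega')$ is symmetric in the two forms. Define $\mathcal{P}_N^{\omega'}$ using $\cA^{\omega'}<b_N\le\cA^{\omega}$, and set $b_N-c'_N:=\inf\mathcal{P}_N^{\omega'}$ (with $c'_N=0$ if the set is empty). Three facts need checking.
\begin{enumerate}
    \item $\inf \mathcal{P}_N^{\omega'}>-\infty$. This uses the symmetric \emph{if and only if} in the $U$-comparable condition.
    \item $b_N-c'_N\to+\infty$. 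The same argument by contradiction applies: if the infimum stayed bounded, we would get elements with bounded $\omega'$-action but $\omega$-action at least $b_N\to+\infty$.
    \item $b_N-c'_N$ is monotone in $N$. The same case analysis applies.
\end{enumerate}
Then Lemma \ref{lem:sequence} and the vanishing lemma hold verbatim, and they give the second quasi-isomorphism. Both quasi-isomorphisms have the same source, so composing the induced isomorphism of the first with the inverse of that of the second yields the isomorphism between the two target homologies.

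\emph{Main obstacle.} I expect the delicate point to be the vanishing lemma, which was already invoked above. It relies on two things. First, the Mittag-Leffler argument via Lemma \ref{lem:homoquasi}. Second, the claim that the composite $\ker(p_{N+k'})\to\ker(p_{N+k})$ vanishes for $k'\gg k$; this holds because $\omega$-actions in $\ker(p_{N+k'})$ are at least $b_{N+k'}-c_{N+k'}>b_{N+k}$, so such elements are killed by the truncation to action $<b_{N+k}$. I would double-check that this vanishing holds at the chain level, as a statement about the structure maps, and not only on homology. Beyond that, the corollary is formal bookkeeping.
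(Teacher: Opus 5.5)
Your proposal is correct and follows the paper's own route. The paper gets this corollary the same way: the surjective chain maps $p_N$ of Lemma~\ref{lem:sequence} give a levelwise short exact sequence of inverse systems, Lemma~\ref{lem:inverselong} together with $H(\tel^*_{\leftarrow}(\ker(p_N)))=0$ makes the induced map a quasi-isomorphism, and the symmetric rerun for $\omega'$ with $c'_N$ gives a second quasi-isomorphism out of the same source, so the paper's argument needs nothing beyond what you wrote (your worry about the vanishing is unfounded: it does hold at the chain level, since the structure maps are action truncations).
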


It remains to compare the above homology computed by the degeneration data with the usual definition using non-degeneration data.

\subsection{Comparison of two chain models}
In this section we fix the symplectic form $\omega$ and omit it from the notation. Let $\cD^{\mathfrak{b}}_{\deg}$ be the acceleration data in Lemma \ref{lem:degdata}. There exist non-degenerate acceleration data $\cD=(G_{n,t},J'_{s,t})$ such that
\begin{itemize}
    \item All $G_{n,t}$ are non-degenerate.
    \item The homotopy $(G_{n,t},J'_{s,t})$ is regular to define the telescope $\tel^*(\cD)$.
    \item $H_{n,t}-1/4<G_{n,t}<H_{n,t}$ for all $n$.
\end{itemize}
Such data exists by perturbing the degenerate data $\cD^{\mathfrak{b}}_{\deg}$. Therefore, the original definition \eqref{eq:defSH} of symplectic cohomology with support says that
\[
SH^*_M(K)\cong H(\varprojlim_b\tel^*(\cD)_{<b}).
\]
Let $c_N$ be the sequence chosen in Lemma \ref{lem:sequence}. We can define the telescope
\[
\tel^*(CF^*_{<b_N-c_N}(G_{N,t})\to CF^*_{<b_N-c_N}(G_{N+1,t})\to\cdots).
\]
They form an inverse system with respect to $N$:
\begin{equation}\label{eq:homoinverse2}
    \begin{tikzcd}
    \cdots \arrow[d]\\
    \tel^*(CF^*_{<b_{N+1}-c_{N+1}}(G_{N+1,t})\to CF^*_{<b_{N+1}-c_{N+1}}(G_{N+2,t})\to\cdots) \arrow[d]\\
    \tel^*(CF^*_{<b_N-c_N}(G_{N,t})\to CF^*_{<b_N-c_N}(G_{N+1,t})\to\cdots)  \arrow[d]\\
    \cdots
\end{tikzcd}.
\end{equation}
Here the structural maps in the inverse system is defined as in \eqref{eq:homoinverse1}, first do action truncation then do inclusion. We write the inverse telescope of \eqref{eq:homoinverse2} as
\[
\tel^*_{\leftarrow}(\tel^*(CF^*_{<b_N-c_N}(G_{N,t}) \to \cdots)).
\]

\begin{lemma}\label{lem:compare1}
    There is an isomorphism of homology groups
    \[
    H(\varprojlim_b\tel^*(\cD)_{<b})\to H(\tel^*_{\leftarrow}(\tel^*(CF^*_{<b_N-c_N}(G_{N,t}) \to \cdots))).
    \]
\end{lemma}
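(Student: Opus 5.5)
We compare both sides with the inverse telescope of the intermediate system $T_N:=\tel^*(\cD)_{<b_N-c_N}$. The point is that $\tel^*(\cD)_{<b}$ is built from \emph{all} the $G_{n,t}$, $n\geq 1$. The complex $\tel^*(CF^*_{<b_N-c_N}(G_{N,t})\to\cdots)$, which we denote $S_N$, is the sub-telescope starting at $G_{N,t}$.

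\textbf{Step 1.} Since $b_N-c_N\to+\infty$ and is nondecreasing, the system $\{\tel^*(\cD)_{<b}\}_b$ is cofinally indexed by $b=b_N-c_N$. Its structure maps are the action-quotient projections, which are surjective, so the system is Mittag-Leffler. By Lemma \ref{lem:homoquasi}, the homology of $\tel^*_{\leftarrow}(T_N)$ therefore agrees with $H(\varprojlim_N T_N)=H(\varprojlim_b\tel^*(\cD)_{<b})$.

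\textbf{Step 2.} Next we compare $T_N$ with $S_N$. There is a chain map $S_N\to T_N$: the inclusion of the tail of a telescope. Its cokernel is the telescope of the first $N-1$ pieces, glued along $h_{N-1}$ into $CF(G_{N,t})$. More cleanly, we use the standard fact that for a telescope of a one-ray, the inclusion of the tail starting at index $N$ is a quasi-isomorphism. The explicit homotopy inverse slides $x_n$ forward by the composites $h_{N-1}\circ\cdots\circ h_n$, and the homotopy uses the $x'_n$ components. All maps involved (inclusion, the pushing map, the homotopy) are built from $d_n$, $h_n$ and identities, and these do not decrease action. Hence they preserve the truncations $<b_N-c_N$, so $S_N\to T_N$ is a quasi-isomorphism for each $N$. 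The maps are also compatible with the inverse-system maps of \eqref{eq:homoinverse2} (truncation followed by inclusion), because truncation commutes with all of them.

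\textbf{Step 3.} We then have a levelwise quasi-isomorphism of inverse systems $\{S_N\}\to\{T_N\}$. Applying $\tel^*_{\leftarrow}$ to its cone gives an inverse system of acyclic complexes. By the long exact sequence of Lemma \ref{lem:inverselong}, together with the fact that the homotopy inverse limit of levelwise acyclic systems is acyclic (via the Milnor $\varprojlim^1$ sequence), the map $\tel^*_{\leftarrow}(S_N)\to\tel^*_{\leftarrow}(T_N)$ is a quasi-isomorphism. Combining this with Step 1 yields the claimed isomorphism, whose direction is obtained by inverting this quasi-isomorphism and composing with the canonical map from $\varprojlim$ to $\tel_{\leftarrow}$.

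\textbf{Main obstacle.} The hardest step is Step 2, specifically ensuring that the tail-inclusion homotopy equivalence is compatible with action truncation and with the inverse system maps, and that it does not require infinite sums which fail to converge. I would handle this by writing the homotopy inverse explicitly. Each component is finite, since only indices $n<N$ are pushed forward, and is action-nondecreasing because the homotopies are monotone. If working with $\varprojlim^1$ for acyclic systems proves awkward, I would instead use the Mittag-Leffler argument again: the systems have surjective structure maps, so $\tel^*_{\leftarrow}$ computes the ordinary inverse limit.
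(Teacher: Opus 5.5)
Your proposal is correct and is essentially the paper's proof. Step 1 is the paper's application of Lemma \ref{lem:homoquasi} to the surjective truncation system $\tel^*(\cD)_{<b_N-c_N}$. Step 2 is the paper's observation that the tail inclusions $g_N$ are quasi-isomorphisms; you justify this with an explicit action-nondecreasing retraction, where the paper instead compares both telescopes with the direct limit. Step 3 is Corollary \ref{co:inversequasi}.

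One caution: the fallback in your final paragraph would not work. The structure maps $S_{N+1}\to S_N$ (truncate, then include a tail) are not surjective, and in fact $\varprojlim_N S_N=0$. This is exactly why the inverse telescope and its Milnor sequence are needed. Also, only the inclusions $S_N\to T_N$ commute strictly with the structure maps, not your retractions; this is all Step 3 uses.
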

\begin{proof}
    Consider another inverse system
    \begin{equation}\label{eq:homoinverse3}
    \begin{tikzcd}
    \cdots \arrow[d]\\
    \tel^*(CF^*_{<b_{N+1}-c_{N+1}}(G_{1,t})\to CF^*_{<b_{N+1}-c_{N+1}}(G_{2,t})\to\cdots) \arrow[d]\\
    \tel^*(CF^*_{<b_N-c_N}(G_{1,t})\to CF^*_{<b_N-c_N}(G_{2,t})\to\cdots)  \arrow[d]\\
    \cdots
\end{tikzcd}
\end{equation}
where the structural maps are action truncations. We write the inverse telescope of \eqref{eq:homoinverse3} as
\[
\tel^*_{\leftarrow}(\tel^*(CF^*_{<b_N-c_N}(G_{1,t}) \to \cdots)).
\]
Since action truncation maps are surjective and $\lim_N(b_N-c_N)=+\infty$, there is a quasi-isomorphism
\[
\varprojlim_b\tel^*(\cD)_{<b}= \varprojlim_N\tel^*(\cD)_{<b_N-c_N}\to \tel^*_{\leftarrow}(\tel^*(CF^*_{<b_N-c_N}(G_{1,t}) \to \cdots))
\]
by Lemma \ref{lem:homoquasi}. On the other hand, there is a map 
\begin{equation}\label{eq:finitequasi}
    \tel^*_{\leftarrow}(\tel^*(CF^*_{<b_N-c_N}(G_{N,t}) \to \cdots))\to \tel^*_{\leftarrow}(\tel^*(CF^*_{<b_N-c_N}(G_{1,t}) \to \cdots))
\end{equation}
given by inclusions
\[
g_N:\tel^*(CF^*_{<b_N-c_N}(G_{N,t}) \to \cdots)\to \tel^*(CF^*_{<b_N-c_N}(G_{1,t}) \to \cdots).
\]
Each inclusion is a quasi-isomorphism since it only misses finitely many terms and telescope is quasi-isomorphic to the usual direct limit. Therefore, we apply Corollary \ref{co:inversequasi} to show \eqref{eq:finitequasi} is a quasi-isomorphism.
\end{proof}

As in Corollary \ref{co:inversequasi}, the homotopy inverse limit of quasi-isomorphisms is automatically a quasi-isomorphism, which is not necessarily true for the usual inverse limit. This is one reason for us to use the inverse telescope.

Next we consider shifted acceleration data
\[
\cD^1:= (G_{s,t}+1,J'_{s,t}), \quad \cD^{\mathfrak{b},1}_{\deg}:= (H_{s,t}-1/4, J_{s,t}).
\]
By the definition of $H_{n,t}$ in Lemma \ref{lem:degdata}, we have $H_{n,t}-1/4$ satisfying the admissibility.

\begin{lemma}\label{lem:compare2}
    There are chain maps between four telescopes
    \[
    \begin{aligned}
        & \tel^*_{\leftarrow}(\tel^*(CF^*_{<b_N-c_N}(H_{N,t}-1/4) \to CF^*_{<b_N-c_N}(H_{N+1,t}-1/4)\to\cdots))\\
        \to\ & \tel^*_{\leftarrow}(\tel^*(CF^*_{<b_N-c_N}(G_{N,t}) \to CF^*_{<b_N-c_N}(G_{N+1,t})\to\cdots))\\
        \to\ & \tel^*_{\leftarrow}(\tel^*(CF^*_{<b_N-c_N}(H_{N,t}) \to CF^*_{<b_N-c_N}(H_{N+1,t})\to\cdots))\\
        \to\ & \tel^*_{\leftarrow}(\tel^*(CF^*_{<b_N-c_N}(G_{N,t}+1) \to CF^*_{<b_N-c_N}(G_{N+1,t}+1)\to\cdots)),
    \end{aligned}
    \]
    where the composition of the first two and the composition of the last two are unfiltered quasi-isomorphisms.
\end{lemma}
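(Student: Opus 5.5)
The plan is to build all three maps from monotone continuation maps, using the pointwise inequalities
\[
H_{n,t}-1/4<G_{n,t}<H_{n,t}<G_{n,t}+1 .
\]
Each map will be defined levelwise and then checked to be compatible with the inverse systems. Fix $N$. For each $n\geq N$ I choose monotone homotopies, regular in the sense of Theorem \ref{thm:trans2}:
\begin{itemize}
\item from $H_{n,t}-1/4$ to $G_{n,t}$;
\item from $G_{n,t}$ to $H_{n,t}$;
\item from $H_{n,t}$ to $G_{n,t}+1$.
\end{itemize}
For the Floer solutions that matter, the perturbations can be made near asymptotics lying in $U$, as in Lemma \ref{lem:degdata}. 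Since the homotopies are monotone, continuation does not decrease action. So each continuation map restricts to a map on the $<b_N-c_N$ truncations. When a degenerate $b$-admissible function sits at one end, Lemma \ref{lem:nobreak} guarantees that the relevant moduli spaces have asymptotics only in the non-degenerate part of action less than $b_N-c_N\le b_N$. The shifted data are still $b_N$-admissible up to the harmless shift, so this applies.

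Next, the levelwise continuation maps assemble into telescope maps. I will use the standard construction: a map between one-rays given by vertical maps $f_n$, together with chain homotopies $K_n$ witnessing $h'_n f_n\simeq f_{n+1}h_n$, induces a chain map of telescopes. The homotopies $K_n$ come from the two-parameter argument with three homotopies already sketched for $b$-admissible functions. All maps involved are monotone and commute with action truncation. Hence the telescope maps commute strictly with the structural maps of the inverse system (truncate, then include the tail), which gives chain maps between the inverse telescopes $\tel^*_{\leftarrow}$.

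The key point is showing that the two two-step composites are quasi-isomorphisms. Consider the composite from $H-1/4$ through $G$ to $H$. At each level it is chain homotopic to the continuation from $H_{n,t}-1/4$ to $H_{n,t}$. That continuation can be taken to be the constant-in-$s$ shift, which acts by the identity on orbits and has action shift exactly $1/4$. It is therefore the natural map
\[
CF_{<b}(H-1/4)\cong CF_{<b+1/4}(H)\to CF_{<b}(H),
\]
which is a filtered truncation rather than an isomorphism. The remedy is to pass to the inverse limit. Truncation levels $b_N-c_N$ and $b_N-c_N+1/4$ give cofinal inverse systems because $b_N-c_N\to\infty$. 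Using the Mittag-Leffler/surjectivity comparison (Lemma \ref{lem:homoquasi}) and Corollary \ref{co:inversequasi}, both inverse telescopes compute the same completed limit, and the shift map induces the identity on it. The composite through $G+1$ is handled identically with shift $1$.

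I expect the main obstacle to be this last step, for two reasons. First, the chain homotopies must be compatible across $N$, so that they induce homotopies on $\tel^*_{\leftarrow}$; this may require choosing homotopy-of-homotopy data coherently. Second, one must check carefully that the pure shift really induces a quasi-isomorphism after the homotopy inverse limit and not merely on each level. I would first try to sidestep coherence by choosing all continuation data globally in $s$ along the one-ray, so that the maps commute strictly with truncation. Then only the levelwise argument together with Corollary \ref{co:inversequasi} is needed.
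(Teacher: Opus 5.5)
Your strategy is the paper's sandwich argument: levelwise monotone continuations $H-1/4\to G\to H\to G+1$ are assembled into strict maps of towers, and each two-step composite is homotopic to the constant shift. You are more careful than the paper on one point. The paper simply calls the shift a genuine isomorphism that changes the filtration, whereas you note that at a fixed level it is only the truncation $CF^*_{<\beta_N+1/4}(H_{n,t})\to CF^*_{<\beta_N}(H_{n,t})$, where $\beta_N:=b_N-c_N$.

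\textbf{The gap} is in the tools you use to pass to the limit.
\begin{itemize}
\item For the first composite, both ends are degenerate towers $T_N:=\tel^*(CF^*_{<\beta_N}(H_{n,t}))_{n\ge N}$. Lemma \ref{lem:homoquasi} cannot identify their homology. The map $T_{N+1}\to T_N$ misses the $n=N$ summand. In fact $\varprojlim_N T_N=0$: the $N$-th entry of a compatible family is a finite sum lying in summands with arbitrarily large $n$.
\item You cannot extend back to $n=1$ as in Lemma \ref{lem:compare1}. For large $N$, the constant orbits of $H_{1,t}$ on $M\setminus U$ have action $1<\beta_N$, so the truncated complex is not defined.
\item Comparing with the $G$-towers is exactly Corollary \ref{co:compare}, which relies on the present lemma, so that route is circular.
\item Your proposed sidestep also fails. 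Corollary \ref{co:inversequasi} needs levelwise quasi-isomorphisms, and your composite is levelwise a truncation.
\end{itemize}

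\textbf{The fix} is the naturality of the Milnor sequence, combined with your cofinality remark.
\begin{itemize}
\item For a strict morphism of towers, the five lemma shows that the induced map on $\tel^*_{\leftarrow}$ is a quasi-isomorphism once it induces isomorphisms on $\varprojlim H$ and $\varprojlim\nolimits^1 H$. These outer maps depend only on the levelwise maps on homology.
\item Identify the $H-1/4$ tower with $T'_N:=\tel^*(CF^*_{<\beta_N+1/4}(H_{n,t}))_{n\ge N}$. Pick $m(N)\ge N$ with $\beta_{m(N)}\ge\beta_N+1/4$.
\item Truncation followed by inclusion gives chain maps $T_{m(N)}\to T'_N$. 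These strictly interleave with the truncations $T'_N\to T_N$, so the homology towers are pro-isomorphic. Hence the shift induces isomorphisms on $\varprojlim$ and $\varprojlim\nolimits^1$.
\item Your composite agrees with the shift on homology at every level. It therefore induces the same outer maps and is a quasi-isomorphism.
\item The same argument with shift $1$ handles $G\to H\to G+1$.
\end{itemize}
Only levelwise homotopies enter this argument, so the coherence of homotopies across $N$ that you worried about is not needed.
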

\begin{proof}
    This follows from the sandwich argument in \cite[Section 3.3.2]{Var} and \cite[Proposition 4.2.3]{VarThesis}. We briefly recall the proof here. By the monotonicity of Hamiltonians
    \[
    H_{n,t}-1/4< G_{n,t}< H_{n,t}, \quad \forall n
    \]
    we have homotopies connecting $H_{n,t}-1/4,G_{n,t}$ and connecting $G_{n,t},H_{n,t}$ respectively. These homotopies can be extended to construct chain maps between the inverse telescopes. On the other hand, the composition of the two homotopies is homotopic to the linear homotopy connecting $H_{n,t}-1/4$ and $H_{n,t}$. The linear homotopy gives a group isomorphism on the homology level, since our Hamiltonians are defined by a translation; see \cite[Proposition 4.2.3]{VarThesis}. The only difference between our setup and \cite{VarThesis} is that \cite{VarThesis} weights the Floer solutions by a Novikov variable but we don't. Therefore, the homology level map obtained in \cite[Proposition 4.2.3]{VarThesis} is the identity map weighted by a Novikov variable. What we obtained is a genuine isomorphism, which changes the filtration though. This implies that the composition of the first two chain maps is a unfiltered quasi-isomorphism. The same argument works for the composition of the last two maps. 
    
\end{proof}

A direct corollary is 
\begin{corollary}\label{co:compare}
    There is a quasi-isomorphism
    \[
    \begin{aligned}
        &\tel^*_{\leftarrow}(\tel^*(CF^*_{<b_N-c_N}(G_{N,t}) \to CF^*_{<b_N-c_N}(G_{N+1,t})\to\cdots))\\
        \to\ & \tel^*_{\leftarrow}(\tel^*(CF^*_{<b_N-c_N}(H_{N,t}) \to CF^*_{<b_N-c_N}(H_{N+1,t})\to\cdots)).
    \end{aligned}
    \]
    Therefore, we have that
    \[
    SH^*_M(K)\cong H(\tel^*_{\leftarrow}(\tel^*(CF^*_{<b_N-c_N}(H_{N,t}) \to CF^*_{<b_N-c_N}(H_{N+1,t})\to\cdots))).
    \]
\end{corollary}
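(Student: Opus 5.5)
The quasi-isomorphism in Corollary \ref{co:compare} will be read off from the chain of maps in Lemma \ref{lem:compare2}, using a two-out-of-six argument. Write $T(F)$ for $\tel^*_{\leftarrow}(\tel^*(CF^*_{<b_N-c_N}(F_{N,t})\to CF^*_{<b_N-c_N}(F_{N+1,t})\to\cdots))$ and consider
\[
T(H-1/4)\xrightarrow{\alpha} T(G)\xrightarrow{\beta} T(H)\xrightarrow{\gamma} T(G+1).
\]
Lemma \ref{lem:compare2} says that $\beta\circ\alpha$ and $\gamma\circ\beta$ induce isomorphisms on homology.

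On homology, $\beta_*\alpha_*$ bijective makes $\beta_*$ surjective, and $\gamma_*\beta_*$ bijective makes $\beta_*$ injective. Hence $\beta_*$ is an isomorphism, so $\beta$ is the claimed quasi-isomorphism. This step is formal.

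I would check that the composite maps in Lemma \ref{lem:compare2} really are the composites of the displayed maps, and not merely homotopic to some other isomorphism. Since homotopic maps agree on homology, this check suffices.

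For the second assertion I combine three facts:
\begin{itemize}
\item $SH^*_M(K)\cong H(\varprojlim_b\tel^*(\cD)_{<b})$, by definition \eqref{eq:defSH}.
\item Lemma \ref{lem:compare1} identifies this with $H(T(G))$.
\item The quasi-isomorphism $\beta$ identifies $H(T(G))$ with $H(T(H))$.
\end{itemize}
Composing the three isomorphisms gives $SH^*_M(K)\cong H(T(H))$.

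The main point to verify is that the sequence $c_N$ is the same in all four telescopes, which are truncated at $b_N-c_N$. The shifted data $H-1/4$ and $G+1$ must also be admissible at these truncation levels, so that the telescopes are defined with the constant degenerate orbits excluded. I expect this is the only delicate point. The constant orbits on $M\setminus U$ have action $n$ or $n-1/4$, which exceeds $b_n=n-1/2$, so the truncation $b_N-c_N\le b_N$ excludes them and the check should go through.
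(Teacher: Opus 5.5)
Your proposal is correct and follows the paper's proof. The paper also deduces that the middle map $T(G)\to T(H)$ in Lemma \ref{lem:compare2} is a quasi-isomorphism (you spell out the two-out-of-six step it leaves implicit), and then chains it with Lemma \ref{lem:compare1} and \eqref{eq:defSH}. Your check that the shifted constant orbits have action $n-1/4>b_n=n-1/2$ is a correct detail that the paper only asserts.
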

\begin{proof}
    Lemma \ref{lem:compare2} says that the second map therein is a quasi-isomorphism. Then we apply Lemma \ref{lem:compare1} to finish the proof.
\end{proof}

Now we have shown that the degenerate acceleration data also compute the symplectic cohomology with support. Redo the construction in this section for $\omega'$. Theorem \ref{thm:main} follows from Corollary \ref{co:homocompare} and Corollary \ref{co:compare}.

\section{Appendix: homotopy inverse limit}\label{sec:homoinverse}

In this appendix, we collect some facts about the homotopy inverse limit. Consider an inverse system of chain complexes:
$$
\mathcal{C}: C_1^*\xleftarrow{i_{12}} C_2^* \xleftarrow{i_{23}}\cdots.
$$
We define $P^*:=\prod_l C_l^*$ as the degree-wise direct product of $C_l^*$'s. There is a chain map 
\begin{equation}\label{eq:conemap}
    f: P\to P, \quad (c_l)\mapsto (c_l-i_{l,l+1}(c_{l+1}))
\end{equation}
which gives an exact sequence
\[
0\to \ker(f)\to P\to P\to \coker(f)\to 0.
\]
The usual inverse limit of this system is defined as $\varprojlim_l C_l:= \ker(f)$, and the $\varprojlim\nolimits^1$ term is defined as $\varprojlim_l\nolimits^1 C_l:= \coker(f)$. A model for the homotopy inverse limit, the \textit{inverse telescope} complex is defined as 
$$
\tel^*_{\leftarrow}(\mathcal{C}):= \Cone(f)[-1].
$$
It always enjoys a Milnor exact sequence.

\begin{lemma}
[Lemma A.7 in \cite{BSV}]
    There is a short exact sequence
    $$
    0\to \varprojlim_l\nolimits^1 H^{j-1}(C_l^*)\to H^j(\tel^*_\leftarrow(\mathcal{C}))\to \varprojlim_l H^j(C_l^*)\to 0.
    $$
\end{lemma}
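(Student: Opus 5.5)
This is the Milnor sequence for the inverse telescope $\tel^*_{\leftarrow}(\mathcal{C})=\Cone(f)[-1]$, where $f\colon P\to P$ is the chain map \eqref{eq:conemap}. The plan is to get it from the long exact sequence of a mapping cone, and then identify the kernel and cokernel of $H(f)$ with $\varprojlim$ and $\varprojlim^1$ of the homology groups.

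\textbf{Step 1: the cone sequence.} Since $\tel^*_{\leftarrow}(\mathcal{C})=\Cone(f)[-1]$, there is a short exact sequence of complexes
\[
0\to P[-1]\to \tel^*_{\leftarrow}(\mathcal{C})\to P\to 0,
\]
whose connecting homomorphism is, up to sign, $H(f)$. This gives the long exact sequence
\[
\cdots\to H^{j-1}(P)\xrightarrow{H(f)} H^{j-1}(P)\to H^j(\tel^*_{\leftarrow}(\mathcal{C}))\to H^j(P)\xrightarrow{H(f)} H^j(P)\to\cdots.
\]
The sign conventions need checking, but they do not affect kernels or cokernels.

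\textbf{Step 2: homology commutes with the product.} Because products are exact in abelian groups, $H^j(P)=\prod_l H^j(C_l)$. Under this identification, $H(f)$ is the map $(a_l)\mapsto (a_l-H(i_{l,l+1})a_{l+1})$ for the inverse system $\{H^j(C_l)\}$.

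\textbf{Step 3: identify kernel and cokernel.} By the definitions recalled above, applied to the inverse system of abelian groups $H^j(C_l)$:
\begin{itemize}
\item the kernel of $H(f)$ on $H^j(P)$ is $\varprojlim_l H^j(C_l)$;
\item the cokernel of $H(f)$ on $H^{j-1}(P)$ is $\varprojlim^1_l H^{j-1}(C_l)$.
\end{itemize}
Splitting the long exact sequence at $H^j(\tel^*_{\leftarrow}(\mathcal{C}))$ then yields
\[
0\to \coker\bigl(H^{j-1}(f)\bigr)\to H^j(\tel^*_{\leftarrow}(\mathcal{C}))\to \ker\bigl(H^j(f)\bigr)\to 0,
\]
which is the claimed Milnor sequence.

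The only delicate point is Step 1: fixing the degree shift of $\Cone(f)[-1]$ so that the cokernel term lands in degree $j-1$ and the kernel term in degree $j$. I would check this directly from the cone differential $(p,q)\mapsto(dp,\,f(p)-dq)$, with $p\in P^j$ and $q\in P^{j-1}$.
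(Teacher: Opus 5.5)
Your argument is correct: with $\tel^j_{\leftarrow}(\mathcal{C})=P^j\oplus P^{j-1}$ and differential $(p,q)\mapsto(dp,f(p)-dq)$, the connecting map of $0\to P[-1]\to\tel^*_{\leftarrow}(\mathcal{C})\to P\to 0$ is $[p]\mapsto[f(p)]$ on $H^j(P)=\prod_l H^j(C_l)$. So the degrees land as in the statement, and the $\ker$ and $\coker$ of $H(f)$ are $\varprojlim$ and $\varprojlim^1$. The paper gives no proof of its own, only the citation to Lemma A.7 of BSV, and your derivation is the standard argument behind that result.
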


\begin{corollary}\label{co:inversequasi}
    Suppose there is a morphism between inverse systems
    \[
    \begin{tikzcd}
       & C_1^*  \arrow[d, "g_1"]  & C_2^* \arrow[l] \arrow [d, "g_2"] & \arrow[l] \cdots\\
       & D_1^*  & D_2^* \arrow[l] & \arrow[l] \cdots
    \end{tikzcd}
    \]
    such that all $g_l$ are quasi-isomorphisms. Then there is a quasi-isomorphism 
    \[
    \tel^*_\leftarrow(\mathcal{C})\to \tel^*_\leftarrow(\mathcal{D}).
    \]
\end{corollary}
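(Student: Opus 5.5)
The plan is to deduce the statement from the Milnor exact sequence (Lemma A.7 of \cite{BSV}) by naturality and the five lemma. First, check that a morphism of inverse systems $g=(g_l)$ induces a chain map $\tel^*_\leftarrow(\mathcal{C})\to\tel^*_\leftarrow(\mathcal{D})$. Set $G:=\prod_l g_l:P_{\mathcal{C}}\to P_{\mathcal{D}}$. The commuting squares $g_l\circ i_{l,l+1}=i'_{l,l+1}\circ g_{l+1}$ give $f_{\mathcal{D}}\circ G=G\circ f_{\mathcal{C}}$, where $f$ is the map in \eqref{eq:conemap}. Hence $(G,G)$ defines a chain map $\Cone(f_{\mathcal{C}})\to\Cone(f_{\mathcal{D}})$, and shifting by $[-1]$ gives the desired map $\tel^*_\leftarrow(g)$.

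Next, establish naturality of the Milnor sequence. The inverse telescope sits in a short exact sequence of complexes
\[
0\to P_{\mathcal{C}}[-1]\to \Cone(f_{\mathcal{C}})[-1]\to P_{\mathcal{C}}\to 0,
\]
and the connecting map of its long exact sequence is $H(f_{\mathcal{C}})$. Products are exact, so $H(P_{\mathcal{C}})=\prod_l H(C_l)$ and $H(f_{\mathcal{C}})$ is the map $(x_l)\mapsto(x_l-i_{l,l+1}x_{l+1})$ on homology. Splitting the long exact sequence into kernel and cokernel of this map recovers the Milnor sequence. Since $\tel^*_\leftarrow(g)$ is compatible with both the sub and quotient maps via $G$, it induces a map of Milnor sequences: the left vertical map is $\varprojlim^1 H(g_l)$, the middle is $H(\tel^*_\leftarrow(g))$, and the right is $\varprojlim H(g_l)$.

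Finally, since each $H(g_l)$ is an isomorphism, the map of inverse systems of groups $\{H^j(C_l)\}\to\{H^j(D_l)\}$ is an isomorphism of inverse systems. Both $\varprojlim$ and $\varprojlim^1$ are functors, so they send it to isomorphisms. The five lemma (in its short-exact-sequence form) then shows the middle map $H^j(\tel^*_\leftarrow(\mathcal{C}))\to H^j(\tel^*_\leftarrow(\mathcal{D}))$ is an isomorphism for all $j$.

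The only point requiring care is the naturality step: verifying that the identification of the boundary map with $H(f)$, and the resulting Milnor sequence, commute with the induced maps. I expect this to be a routine diagram check rather than a genuine obstacle. Sign conventions in the cone are irrelevant here, since $G$ is applied diagonally.
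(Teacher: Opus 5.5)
Your argument is correct, but it takes a different route from the paper's. The paper never uses naturality of the Milnor sequence. It observes that the cone of the induced map is itself an inverse telescope, $\Cone(\tel^*_\leftarrow(\mathcal{C})\to\tel^*_\leftarrow(\mathcal{D}))\cong\tel^*_\leftarrow(\{\Cone(g_l)\})$, since cones commute with degree-wise products and with the map $f$. It then applies the Milnor sequence once, to the inverse system of acyclic complexes $\Cone(g_l)$: their homology system is zero, so the $\varprojlim$ and $\varprojlim^1$ terms both vanish and the cone is acyclic.

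That route uses the Milnor sequence purely as a black box, but it leaves unstated a small sign check in identifying the iterated cone. Your route avoids the iterated cone. In exchange it needs the Milnor sequence to be functorial, which you correctly establish by rederiving it from $0\to P[-1]\to\Cone(f)[-1]\to P\to 0$ with connecting map $\pm H(f)$, and then finishing with the short five lemma.

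Your version gives a little more information, namely an explicit map of Milnor sequences identifying the induced maps on the $\varprojlim$ and $\varprojlim^1$ terms. The paper's version reduces everything to acyclicity of a single inverse telescope. That form extends verbatim to levelwise maps whose cones are only pro-acyclic in homology, for instance cones satisfying the trivial Mittag-Leffler condition used elsewhere in the paper.
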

\begin{proof}
    Note that $\Cone(\tel^*_\leftarrow(\mathcal{C})\to \tel^*_\leftarrow(\mathcal{D}))= \tel^*_\leftarrow(\Cone(g_l))$. The above Milnor exact sequence shows that $\Cone(\tel^*_\leftarrow(\mathcal{C})\to \tel^*_\leftarrow(\mathcal{D}))$ is acyclic.
\end{proof}

\begin{lemma}\label{lem:inverselong}
    Suppose we have three inverse systems $\mathcal{C},\mathcal{D},\mathcal{E}$ with a short exact sequence
    \[
    0\to C_l\to D_l\to E_l\to 0
    \]
    for each $l$, compatiable with structural maps. Then there is a long exact sequence
    \[
    \cdots\to H(\tel^*_{\leftarrow}(\mathcal{C}))\to H(\tel^*_{\leftarrow}(\mathcal{D}))\to H(\tel^*_{\leftarrow}(\mathcal{E}))\to \cdots.
    \]
\end{lemma}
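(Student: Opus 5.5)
The plan is to show that the inverse telescope construction is exact, degreewise, on short exact sequences of inverse systems. The long exact sequence will then be the usual one for a short exact sequence of chain complexes.

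Recall that $\tel^*_{\leftarrow}(\mathcal{C})=\Cone(f_{\mathcal{C}})[-1]$, where $f_{\mathcal{C}}:P_{\mathcal{C}}\to P_{\mathcal{C}}$ is the map \eqref{eq:conemap} and $P_{\mathcal{C}}=\prod_l C_l^*$.

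First, take the products $P_{\mathcal{C}}\to P_{\mathcal{D}}\to P_{\mathcal{E}}$ of the given maps $C_l\to D_l\to E_l$. A degreewise direct product of short exact sequences of abelian groups is short exact, because products are exact in abelian groups. So this gives a short exact sequence of chain complexes
\[
0\to P_{\mathcal{C}}\to P_{\mathcal{D}}\to P_{\mathcal{E}}\to 0.
\]

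Next, use the compatibility of the maps $C_l\to D_l\to E_l$ with the structural maps $i_{l,l+1}$. This shows that the product maps intertwine $f_{\mathcal{C}}$, $f_{\mathcal{D}}$ and $f_{\mathcal{E}}$. We then have a commutative ladder of maps of complexes $f_\bullet:P_\bullet\to P_\bullet$. As a graded module, the cone of $f$ is $P[1]\oplus P$, so the induced maps
\[
\Cone(f_{\mathcal{C}})[-1]\to\Cone(f_{\mathcal{D}})[-1]\to\Cone(f_{\mathcal{E}})[-1]
\]
are chain maps. Degreewise they are direct sums of two copies of the short exact sequence of products, so they form a short exact sequence of complexes
\[
0\to\tel^*_{\leftarrow}(\mathcal{C})\to\tel^*_{\leftarrow}(\mathcal{D})\to\tel^*_{\leftarrow}(\mathcal{E})\to 0.
\]
Checking that these are chain maps is a routine verification with the cone differential $(p,q)\mapsto(-dp,\ f(p)+dq)$.

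Finally, the snake lemma applied to this short exact sequence gives the long exact sequence in homology, with the connecting homomorphism defined in the usual way. Nothing here is really an obstacle. The only point that needs care is the exactness of the infinite products, and that holds over $\ZZ$ degreewise.
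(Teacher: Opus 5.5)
Your proposal is correct and takes the same route as the paper: the degreewise products form a short exact sequence that is compatible with $f$, this gives a short exact sequence of mapping cones, and that yields the long exact sequence in homology. The only difference is that you spell out the routine checks the paper leaves implicit, namely exactness of infinite products over $\ZZ$ and the chain-map verification with the cone differential.
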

\begin{proof}
    Taking the product, we have a short exact sequence
    \[
    0\to \prod_l C_l\to \prod_l D_l\to \prod_l E_l\to 0.
    \]
    This short exact sequence is compatible with the map $f$ in \eqref{eq:conemap}. So we get a short exact sequence of mapping cones, which induces the long exact sequence.
\end{proof}

By the relation between mapping cone and kernel, we have
\begin{lemma}\label{lem:homoquasi}
    If $\coker(f)$ is acyclic, then there exists a quasi-isomorphism 
    \[
    \varprojlim_l C_l\to \tel^*_\leftarrow(\mathcal{C}).
    \]
\end{lemma}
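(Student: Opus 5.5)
We want to show: if $\coker(f)$ is acyclic, then the natural map $\varprojlim_l C_l=\ker(f)\to \tel^*_\leftarrow(\mathcal{C})=\Cone(f)[-1]$ is a quasi-isomorphism. The plan is to split the four-term exact sequence
\[
0\to \ker(f)\to P\xrightarrow{f} P\to \coker(f)\to 0
\]
into two short exact sequences. We then compare the cone of $f$ with the cone of the corestricted map $P\to \im(f)$.

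First we write down the map. The mapping cone $\Cone(f)[-1]$ has underlying module $P\oplus P[-1]$, and its differential is $(p,q)\mapsto (dp,\, f(p)-dq)$ up to the paper's sign conventions. The inclusion $\ker(f)\to \Cone(f)[-1]$, $k\mapsto (k,0)$, is a chain map because $f(k)=0$.

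Next we factor $f$ as $P\xrightarrow{\bar f}\im(f)\xrightarrow{\iota} P$. This gives a short exact sequence of chain complexes $0\to\im(f)\to P\to\coker(f)\to 0$. Since $\coker(f)$ is acyclic, its long exact sequence shows that $\iota$ is a quasi-isomorphism.

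The central step uses functoriality of cones. The pair $(\mathrm{id}_P,\iota)$ is a morphism of arrows from $\bar f$ to $f$, so it induces a chain map $\Cone(\bar f)[-1]\to\Cone(f)[-1]$. This map sits in a morphism of the standard short exact sequences $0\to X[-1]\to\Cone(\cdot)[-1]\to P\to 0$, with $X=\im(f)$ and $X=P$ respectively. By the five lemma on the associated long exact sequences, it is a quasi-isomorphism.

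It therefore remains to show that $\ker(f)\to\Cone(\bar f)[-1]$ is a quasi-isomorphism. Since $\bar f$ is surjective, there is a short exact sequence of complexes
\[
0\to \ker(f)\to \Cone(\bar f)[-1]\to \Cone(\mathrm{id}_{\im f})[-1]\to 0.
\]
Here the first map is $k\mapsto(k,0)$. The second map is $(p,q)\mapsto(\bar f(p),q)$, which is surjective because $\bar f$ is surjective, and its kernel is exactly $\ker(f)\oplus 0$. The cone of an identity map is contractible, so the long exact sequence gives the result.

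The composite $\ker(f)\to\Cone(\bar f)[-1]\to\Cone(f)[-1]$ is precisely the natural inclusion, so the composite is the desired quasi-isomorphism.

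The main obstacle is bookkeeping. The cone sign conventions must be consistent with the definition of $\tel^*_\leftarrow$. We must also check that the second map in the displayed short exact sequence is a chain map and that its kernel is exactly $\ker(f)$. Everything else is a formal five-lemma argument.
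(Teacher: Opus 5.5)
Your argument is correct and is essentially the paper's proof written out in full. The paper only cites the long exact sequence relating $H(\ker f)$, $H(\Cone(f))$ and $H(\coker f)$ \cite[Exercise 1.5.9]{Weibel}. Your factorization $f=\iota\circ\bar f$ is the standard derivation of that sequence: $\ker(f)\to\Cone(\bar f)[-1]$ is a quasi-isomorphism because $\bar f$ is onto, and $\Cone(\bar f)[-1]\to\Cone(f)[-1]$ is a quasi-isomorphism because $\coker(f)$ is acyclic.
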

\begin{proof}
    This follows from the long exact sequence relating the homology groups of $\ker,\Cone$ and $\coker$; see \cite[Exercise 1.5.9]{Weibel}.
\end{proof}

A useful condition for the vanishing of $\coker(f)=\varprojlim\nolimits^1$ term is the Mittag-Leffler condition \cite[Definition 3.5.6]{Weibel}. For example, an inverse system with surjective structural maps satisfies the Mittag-Leffler condition.

\bibliography{main}

\bibliographystyle{amsplain}

\end{document}